\newcommand{\Hil}{\mathcal{H}}
\newcommand{\B}{\mathcal{B}}
\newcommand{\N}{\mathbb{N}}
\newcommand{\R}{\mathbb{R}}
\newcommand{\K}{\mathbb{K}}
\newcommand{\C}{\mathbb{C}}
\newcommand{\lsub}{{\lambda \in \Lambda}}
\newcommand{\BL}{{\mathfrak{B}}}
\newcommand{\betap}{\beta_{\Phi,\B,\BL}}
\newcommand{\Bp}{B_{\Phi,\B,\BL}}
\newcommand{\Ap}{A_{\Phi,\B,\BL}}
\newcommand{\eps}{\varepsilon}
\newcommand{\supp}{\text{supp} \,}
\newcommand{\sinc}{\text{sinc} \,}
\newtheorem{theorem}{Theorem}[section]
\newtheorem{remark}{Remark}[section]
\newtheorem{definition}[theorem]{Definition}
\newtheorem{lemma}[theorem]{Lemma}
\newtheorem{corollary}[theorem]{Corollary}
\newtheorem{problem}[theorem]{Problem}
\newtheorem{example}[theorem]{Example}
\title[Phase retrieval for continuous frames in Banach spaces]{Phase retrieval in the general setting of continuous frames for Banach spaces}
\author{Rima Alaifari}
\address[Rima Alaifari]{ Department of Mathematics, ETH Z\"{u}rich, R\"{a}mistrasse 101, 8092 Z\"{u}rich, Switzerland}
\author{Philipp Grohs}
\address[Philipp Grohs]{ Department of Mathematics, ETH Z\"{u}rich, R\"{a}mistrasse 101, 8092 Z\"{u}rich, Switzerland and
Faculty of Mathematics, University of Vienna, Oskar-Morgenstern-Platz 1, 1090 Wien, Austria}
\begin{document}

\date{\today}
\maketitle

\begin{abstract}

We develop a novel and unifying setting for phase retrieval problems that works in Banach spaces and for continuous frames and consider the questions of uniqueness and stability of the reconstruction from phaseless measurements. Our main result states that also in this framework, the problem of phase retrieval is never uniformly stable in infinite dimensions. On the other hand, we  show weak stability of the problem. This complements recent work \cite{cahill}, where it has been shown that phase retrieval is always unstable for the setting of discrete frames in Hilbert spaces. In particular, our result implies that the stability properties cannot be improved by oversampling the underlying discrete frame.

We generalize the notion of \emph{complement property (CP)} to the setting of continuous frames for Banach spaces (over $\K=\R$ or $\K=\C$) and verify that it is a necessary condition for uniqueness of the phase retrieval problem; when $\K=\R$ the CP is also sufficient for uniqueness. In our general setting, we also prove a conjecture posed by Bandeira et al. \cite{Bandeira_SavingPhase}, which was originally formulated for finite-dimensional spaces: for the case $\K=\C$ the \emph{strong complement property (SCP)} is a necessary condition for stability. To prove our main result, we show that the SCP can never hold for frames of infinite-dimensional Banach spaces.

\end{abstract}

\section{Introduction}

In different applications, where one seeks to reconstruct an unknown function $f$, only measurements that do not contain phase (or sign) information are at hand. Phase retrieval deals with the recovery of $f$ from such \emph{phaseless} intensity measurements.

The most prominent example of a phase retrieval problem was introduced with the discovery of X-ray crystallography  \cite{miao2008extending} in 1915 to determine the structure of crystals: Recover a function $f$ from the magnitude of its Fourier transform, $|\hat{f}|$. This problem arises also in other applications, such as in diffractive imaging \cite{bunk2007diffractive} and in astronomy \cite{fienup1987phase}. It is known that the solution to this problem is in general not unique.

In more general phase retrieval problems, one seeks to reconstruct a function $f$ in a Hilbert space $\Hil$ from the magnitudes of its \emph{frame coefficients,} i.e. from $(|\langle f, \varphi_\lambda \rangle|)_{\lambda \in \Lambda}$ for a \emph{frame} $\Phi = (\varphi_\lambda)_{\lambda \in \Lambda}$ of $\Hil$. Such problems occur in audio processing applications, in particular in signal noise reduction \cite{deller1993discrete} and automatic speech recognition \cite{becchetti2008speech}. In these applications the signals typically have a highly noisy phase so that it is essential to employ \emph{phaseless} measurements only.

In the finite-dimensional setting, this problem 
and its properties have been studied in e.g. \cite{balan2015reconstruction, balan2006signal, balan2015lipschitz, Bandeira_SavingPhase, candes2015phase}. More precisely, in these works it has been analyzed under which conditions the problem is uniquely solvable, and in case of injectivity, whether the reconstruction from phaseless measurements is stable in a certain sense.

In infinite dimensions and for a specific frame, namely a \emph{semi-discrete frame} of Cauchy wavelets, Mallat \& Waldspurger \cite{mw} have been able to show injectivity and weak stability of the phase retrieval problem. Moreover, they also state that the problem is not uniformly stable. Together with Daubechies \& Thakur \cite{adgt}, we prove uniqueness and weak stability of the phase retrieval problem for a general class of semi-discrete frames, given that they consist of real-valued and band-limited functions.

Recently, Cahill, Casazza \& Daubechies \cite{cahill} considered the general question of stability of the reconstruction procedure in the setting of Hilbert spaces and discrete frames. Their main findings are that the problem of phase retrieval is always stable for finite-dimensional Hilbert spaces but can never be uniformly stable in the infinite dimensional setting. Moreover they study the degradation of stability in finite dimensional subspaces as the dimensions of these subspaces grow. For a specific instance, the stability is shown to degrade at an exponential rate in the dimensions of the chosen subspaces. We briefly describe their example:

\begin{example}[Cahill, Casazza, Daubechies]\label{ex:Cahill}
For the Hilbert space
$$
	\Hil = \{ f \in L^2(\mathbb{R},\mathbb{R}): \supp \widehat{f} \subseteq [-\pi,\pi]\},
$$
consider the frame $\Phi = \{ \varphi_n\}_{n \in \mathbb{Z}}$ of elements
$\varphi_n := \sinc( \cdot - \frac{n}{4})$. Let $V_n \subset \Hil$ be defined as 
$$
	V_n := \text{span } \{ \varphi_{4 \ell}: \ell \in [-n,n]\}.
$$
Then, there exists a constant $C>0$ such that for all $m\in \mathbb{N}$, there exist $f_m, g_m \in V_{2m}$ with
$$
	\min_{\tau \in \{ \pm 1\}} \|f_m - \tau g_m \|_\Hil > C (m+1)^{-1} 2^{3m} \| (|f_m(n/4)|-|g_m(n/4)|)_{n \in \mathbb{Z}} \|_{\ell^2(\mathbb{Z})}.
$$
\end{example}
A natural question to ask is whether in this example, increasing the redundancy of the frame $\Phi$ could improve on the stability of the problem, resulting in bounds that deteriorate less severely in the dimension $m$ of the subspace. 

Motivated by this question, we consider phase retrieval in a general and unifying setting of continuous frames in Banach spaces in this paper. The main reason for studying general continuous frames is the question whether suitably \emph{oversampling} a discrete frame can improve on the stability properties of the phase retrieval problem. In addition, it provides a unified setting that also includes the cases of semi-discrete frames studied in earlier works. Choosing to work in Banach spaces allows us to treat phase retrieval problems that do not fit in the Hilbert space setting, such as the reconstruction of real-valued band-limited functions in general Paley-Wiener spaces \cite{thakur}.

Before providing a summary of our main findings in Section \ref{contributions}, we give a brief description of the mathematical setting used throughout this paper.

%


\subsection{Mathematical Setup}
First, we fix a reflexive Banach space $\B$ over the field $\K$, where $\K=\C$ or $\K=\R$. Let $\B'$ denote the (topological) dual space of $\B$, i.e., the Banach space of bounded linear functionals $\psi:\B \to \K$. For $x \in \B$, $\psi \in \B'$ we write $[x,\psi]:=\psi(x).$ For a closed subspace $V\subset \mathcal{B}$ we denote $V_\bot:=\{\psi \in \mathcal{B}':\ [x,\psi]=0,\ \mbox{for all }x\in V\}$ the annihilator of $V$ and we use the same notation for the annihilator of a closed  subspace of $\mathcal{B}'$. Note that for the special case when $\B=\mathcal{H}$, $\mathcal{H}$ a Hilbert space with inner product $\langle \cdot, \cdot \rangle_{\mathcal{H}}$, the Riesz lemma implies that $\mathcal{H}$ can be identified with $\mathcal{H}'$ and hence, $[\cdot, \cdot]$ with $ \langle \cdot, \cdot \rangle_{\mathcal{H}}$. 

We further fix a representation system $\Phi=(\varphi_\lambda)_{\lambda \in \Lambda} \subset \mathcal{B}'$ for some (not necessarily discrete) index set $\Lambda$ and the notation $\Phi_S:=(\varphi_\lambda)_{\lambda \in S}$ for $S \subset \Lambda$. For $x \in \B$, we introduce the following abbreviations:
$$
	[x,\Phi]:= ([x,\varphi_\lambda])_\lsub, \quad [x,\Phi_S]:= ([x,\varphi_\lambda])_{\lambda \in S}
$$
and
$$
	|[x,\Phi]|:= (|[x,\varphi_\lambda]|)_\lsub, \quad |[x,\Phi_S]|:= (|[x,\varphi_\lambda]|)_{\lambda \in S}.
$$

We define the quotient space $\mathbb{P}_{\K}\mathcal{B}$ of $\B$ by $\mathbb{P}_{\R}\mathcal{B}:=\mathcal{B}/\{1,-1\}$ for $\K=\mathbb{R}$ and by $\mathbb{P}_{\C}\mathcal{B}:=\mathcal{B}/\mathcal{S}^1$ for $\K=\C$ with distance
$$
	d_\mathcal{B}(x,y):=\min\{\|x-\tau y\|_\B; \tau \in \K, |\tau|=1\}.
$$

The problem of phase retrieval that we consider throughout this paper can now be formulated as follows:
\begin{problem}\label{def-problem}
Define the operator 
\begin{equation}\label{eq:defA}
	\mathcal{A}_\Phi:\mathbb{P}_{\K}\mathcal{B}\to \R_+^\Lambda,\quad
	x\mapsto |[x,\Phi]|.
\end{equation}
Then, the problem of phase retrieval can be formulated as the inversion of $\mathcal{A}_\Phi$. We seek to study the uniqueness and stability properties of this inversion problem. 
\end{problem}
\subsection{Contributions}\label{contributions}
We give a brief overview of the main results of our paper:
\begin{itemize}
%

\item One of the major contributions (Theorem \ref{nostab}) of our paper is a generalization of the results of \cite{cahill} to the non-discrete Banach space setting. In particular, our work for the first time proves the fact that oversampling does not improve the stability of phase retrieval and further confirms the fundamental instability of this problem. \\

\item As part of the proof of Theorem \ref{nostab} we establish the auxiliary fact (Theorem \ref{noframe}) that the index set for a continuous frame is necessarily non-compact for an infinite-dimensional Banach space,
which by itself is an interesting finding. \\

\item On the positive side we are able to show in Theorem \ref{thm:weakstab} that the reconstruction is
always locally stable which, by a compactness argument, directly implies global stability
in the finite-dimensional setting. Local stability has been shown in \cite{mw} in a very specific setting and we adapt and generalize the arguments there to our general setting. To this end we require a novel characterization of relative compactness in solid Banach spaces (Theorem \ref{thm:totalbound}) which is interesting in its own right. \\

\item As another contribution (Theorem \ref{thmstab}) we establish the necessity of the so-called
\emph{strong complement property} introduced in \cite{Bandeira_SavingPhase} for stable phase reconstruction -- a fact which, in the complex case, was only conjectured in \cite{Bandeira_SavingPhase}. This finding suggests that phase retrieval in the complex-valued setting is at least as ill-posed as in the real case.
\end{itemize}
\subsection{Outline} The paper is organized as follows. In Section \ref{sec:injectivity}, the question of uniqueness is considered. It is shown, that as in the finite dimensional setting, the complement property is again a necessary condition for uniqueness and that it is also a sufficient condition in the real-valued case. Section \ref{sec:stability} concerns the question of stability. First, the concepts of admissible Banach spaces and Banach frames are introduced. In Section \ref{sec:weak-stability} we show a weak stability result. Next, we consider uniform stability in Section \ref{sec:strong-stability}. There, the strong complement property is defined and it is proved that it provides a necessary condition for stability. Moreover, we show that it is also a sufficient condition for stability in the real-valued case. In \ref{sec:finite}, uniform stability is derived for the finite dimensional case. Finally, in \ref{sec:infinite} we prove that the strong complement property can never hold in the infinite dimensional setting, so that consequently, the problem of phase retrieval is always unstable in infinite-dimensional Banach spaces.

\section{Injectivity}\label{sec:injectivity}
We first consider the injectivity of $\mathcal{A}_\Phi$.
To this end the following property has been introduced in \cite{balan2006signal}.

\begin{definition}[Complement property]
	The system $\Phi=(\varphi_\lambda)_{\lambda \in \Lambda} \subset \mathcal{B}'$ satisfies the \emph{complement property} (CP) in $\mathcal{B}$ if for every subset 
	$S\subset \Lambda$, either $(\mbox{span }\Phi_S)_\bot=\{0\}$, or $(\mbox{span }\Phi_{\Lambda\setminus S})_\bot=\{0\}$.
\end{definition}

The following two lemmata have been shown in \cite{balan2006signal} and \cite{Bandeira_SavingPhase} in the finite-dimensional setting, as well as in \cite{cahill} for the setting of discrete frames of Hilbert spaces. The proofs carry over to the Banach space case and for uncountable index sets $\Lambda$, but we present them nevertheless, for convenience and to be certain.
\begin{lemma}\label{CPlem}
	If $\mathcal{A}_\Phi$ is injective, then $\Phi$ satisfies the CP in $\B$.
\end{lemma}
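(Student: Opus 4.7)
The plan is to prove the contrapositive: suppose $\Phi$ fails the CP; I will produce two elements of $\mathcal{B}$ whose images under $\mathcal{A}_\Phi$ coincide but which represent distinct points in $\mathbb{P}_\K\mathcal{B}$.

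By the failure of CP, there exists $S\subset \Lambda$ with both annihilators $(\mathrm{span}\,\Phi_S)_\bot$ and $(\mathrm{span}\,\Phi_{\Lambda\setminus S})_\bot$ nontrivial; pick nonzero $x_1\in(\mathrm{span}\,\Phi_S)_\bot$ and $x_2\in(\mathrm{span}\,\Phi_{\Lambda\setminus S})_\bot$. Set $f:=x_1+x_2$ and $g:=x_1-x_2$. A one-line case split on whether $\lambda\in S$ or $\lambda\in\Lambda\setminus S$ shows that for every $\lambda$, either $[f,\varphi_\lambda]=[x_2,\varphi_\lambda]=-[g,\varphi_\lambda]$ or $[f,\varphi_\lambda]=[x_1,\varphi_\lambda]=[g,\varphi_\lambda]$. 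In both cases $|[f,\varphi_\lambda]|=|[g,\varphi_\lambda]|$, so $\mathcal{A}_\Phi(f)=\mathcal{A}_\Phi(g)$.

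It remains to show $[f]\neq[g]$ in $\mathbb{P}_\K\mathcal{B}$. In the real case this means $f\neq\pm g$; since $f-g=2x_2\neq 0$ and $f+g=2x_1\neq 0$, we are done. The complex case is the only subtle point: suppose $f=\tau g$ for some $\tau\in\C$ with $|\tau|=1$, equivalently $(1-\tau)x_1+(1+\tau)x_2=0$. The values $\tau=\pm 1$ force $x_1=0$ or $x_2=0$, both excluded, so $\tau\neq\pm 1$ and $x_1=-\frac{1+\tau}{1-\tau}\,x_2$, making $x_1,x_2$ linearly dependent. But then $x_2$ (being a nonzero scalar multiple of $x_1$) lies in $(\mathrm{span}\,\Phi_S)_\bot\cap(\mathrm{span}\,\Phi_{\Lambda\setminus S})_\bot=(\mathrm{span}\,\Phi)_\bot$, so $[x_2,\varphi_\lambda]=0$ for every $\lambda\in\Lambda$. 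Consequently $\mathcal{A}_\Phi(x_2)=0=\mathcal{A}_\Phi(0)$, while $[x_2]\neq[0]$ in $\mathbb{P}_\C\mathcal{B}$, contradicting injectivity of $\mathcal{A}_\Phi$. So in this degenerate branch I simply replace the pair $(f,g)$ by $(x_2,0)$ to conclude non-injectivity.

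I do not expect any real obstacle here; the only care needed is the complex case, where one has to recognize that dependence of $x_1$ and $x_2$ forces a vector that is annihilated by the entire system $\Phi$, producing an easier witness of non-injectivity. No structural hypotheses on $\Lambda$ (discreteness, countability) or on $\Phi$ (frame bounds) are used, which is exactly why the finite-dimensional argument of \cite{balan2006signal} transfers verbatim to the present Banach-space, continuous-index setting.
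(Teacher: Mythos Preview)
Your proof is correct and follows essentially the same route as the paper: both use the construction $x_1\pm x_2$ from nonzero elements of the two annihilators, verify that the phaseless measurements agree, and handle the possible degeneracy in the complex case by noting that linear dependence of $x_1,x_2$ forces one of them into $(\mathrm{span}\,\Phi)_\bot$, which by itself witnesses non-injectivity via the pair $(x_2,0)$. The only cosmetic difference is that the paper phrases the argument as a direct proof (assume injectivity, deduce CP) rather than the contrapositive you chose.
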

\begin{proof}
	Suppose that $\mathcal{A}_\Phi$ is injective and consider $u\in (\mbox{span }\Phi_S)_\bot$ and
	$v\in (\mbox{span }\Phi_{\Lambda\setminus S})_\bot$. 
	Then, 
	$$
		|[ u\pm v,\varphi_\lambda]|^2
		=|[u,\varphi_\lambda]|^2
		\pm 2 \, \text{Re} ([u,\varphi_\lambda][v,\varphi_\lambda])
		+|[ v,\varphi_\lambda]|^2.
	$$
	By assumption the term $[u,\varphi_\lambda][v,\varphi_\lambda]$ vanishes for all $\lambda \in \Lambda$, which implies that $\mathcal{A}_\Phi(u+v) = \mathcal{A}_\Phi(u-v)$. By the injectivity assumption this implies that either $u=0$ or 
	$$
		v = - \frac{1-\tau}{1+\tau} u
	$$
	for some $\tau \neq -1$. The latter implies $v \in (\mbox{span } \Phi_S)_\bot$ and consequently,
	$[v,\varphi_\lambda]=0$ for all $\lambda \in \Lambda$. As a result, $\mathcal{A}_\Phi(v)=0$ and by the injectivity of $\mathcal{A}_\Phi$, $v=0$. Thus, either $u=0$ or $v=0$, which is precisely the CP.
\end{proof}
In the real case, $\K=\R$, the CP is also a sufficient condition for the injectivity of $\mathcal{A}_\Phi$:
\begin{lemma}\label{lem:CPsuff}
If $\B$ is a Banach space over $\R$, then $\mathcal{A}_\Phi$ is injective if and only if $\Phi$ satisfies the CP in $\B$.
\end{lemma}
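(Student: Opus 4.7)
The necessity direction is already handled by Lemma \ref{CPlem}, so the task is the sufficiency direction: assuming $\K=\R$ and that $\Phi$ satisfies the CP, I need to show that $\mathcal{A}_\Phi$ is injective on $\mathbb{P}_{\R}\mathcal{B}$.

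The plan is to start from two elements $x,y\in\mathcal{B}$ with $\mathcal{A}_\Phi(x)=\mathcal{A}_\Phi(y)$, i.e.\ $|[x,\varphi_\lambda]|=|[y,\varphi_\lambda]|$ for every $\lambda \in \Lambda$, and show $x=\pm y$. Since we are over $\R$, this absolute-value equality for each fixed $\lambda$ forces $[x,\varphi_\lambda]=\sigma(\lambda)[y,\varphi_\lambda]$ with $\sigma(\lambda)\in\{+1,-1\}$ (the sign at indices where both sides vanish may be chosen freely). This sign-partition idea is exactly what converts the CP into an injectivity statement.

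Concretely, I would split $\Lambda$ into
$S:=\{\lambda \in \Lambda:\ [x,\varphi_\lambda]=[y,\varphi_\lambda]\}$
and its complement $\Lambda\setminus S=\{\lambda:\ [x,\varphi_\lambda]=-[y,\varphi_\lambda]\}$, and then consider the two elements $u:=x-y$ and $v:=x+y$ of $\mathcal{B}$. By construction $[u,\varphi_\lambda]=0$ for all $\lambda\in S$, so $u\in(\mbox{span }\Phi_S)_\bot$; symmetrically $v\in(\mbox{span }\Phi_{\Lambda\setminus S})_\bot$. Applying the CP to this particular $S$ yields either $u=0$ (so $x=y$) or $v=0$ (so $x=-y$), and in either case $x$ and $y$ represent the same class in $\mathbb{P}_{\R}\mathcal{B}$.

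There is no real obstacle here: the only mild point to be careful about is the treatment of indices $\lambda$ where $[x,\varphi_\lambda]=[y,\varphi_\lambda]=0$, which can be assigned to $S$ without changing the conclusion. The argument uses only the real-field identity $a^2=b^2\Rightarrow a=\pm b$, so it is genuinely specific to $\K=\R$; over $\C$ the phase has a continuum of possible values at each $\lambda$ and the clean $S$/$\Lambda\setminus S$ dichotomy breaks down, which is why the sufficiency statement is confined to the real case. Combined with Lemma \ref{CPlem}, this gives the desired equivalence.
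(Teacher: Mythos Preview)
Your argument is correct and essentially identical to the paper's own proof: the paper also defines $S:=\{\lambda:\ [x,\varphi_\lambda]=[y,\varphi_\lambda]\}$, observes $x-y\in(\mbox{span }\Phi_S)_\bot$ and $x+y\in(\mbox{span }\Phi_{\Lambda\setminus S})_\bot$, and concludes via the CP that $d_\B(x,y)=0$. Your additional remarks about the zero-value indices and the failure of the dichotomy over $\C$ are accurate but not strictly needed for the proof.
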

\begin{proof}
By Lemma \ref{CPlem}, the injectivity of $\mathcal{A}_\Phi$ implies that $\Phi$ satisfies the CP. For the other direction, suppose now that $\Phi$ satisfies the CP and
	that $x,y\in \B$ with $\mathcal{A}_\Phi(x) = \mathcal{A}_\Phi(y)$. Set $S:=\{\lambda \in \Lambda:\ [x, \varphi_\lambda]=
	[y, \varphi_\lambda]\}$. It follows that $x-y\in (\mbox{span }\Phi_S)_\bot$ and $x+y\in (\mbox{span }\Phi_{\Lambda\setminus S})_\bot$. By the CP it follows that $d_\mathcal{B}(x,y)=0$.
\end{proof}

\subsection{An example}\label{ex:Beurling-CP}
As an application for showing the injectivity by employing the CP, we consider
the reconstruction in the Paley-Wiener space

$$
	\mathcal{PW}^{p,b}_\R:=
	\{f \in L^p(\mathbb{R},\mathbb{R}): \supp \widehat{f} \subseteq [-b/2,b/2]\},
$$
with norm 
$$
	\|f\|_{\mathcal{PW}^{p,b}_\mathbb{K}}:=\|f\|_p
$$
and $1<p<\infty$.

Functions in $\mathcal{PW}^{p,b}_\mathbb{R}$
can be uniquely reconstructed from sufficiently fine sequences of their unsigned samples, as we now show (a similar result is shown in \cite{thakur}, yet we want to illustrate that verifying the CP gives a more general result and allows for a simpler proof).

Suppose that $\Lambda \subset \R$. Consider the representation system $\Phi:=(\varphi_\lambda)_{\lambda\in \Lambda}$ defined by
$$
	[f,\varphi_\lambda] := f(\lambda).
$$
We call $\Lambda$ a \emph{sampling sequence} \cite{seip2004interpolation} for $\mathcal{PW}^{p,b}_\mathbb{R}$ if for all $f \in \mathcal{PW}^{p,b}_\mathbb{R}$, the condition
$$
	[f,\varphi_\lambda]= 0  \text{ for all } \lambda \in \Lambda	
$$
implies that $f=0$. 

Various conditions for $\Lambda$ to be sampling (in terms of density properties) are known and a general summary can be found in e.g. \cite{seip2004interpolation}. One way to characterize sampling sequences is via their \emph{lower Beurling density} \cite{carleson1989collected}, which for a sequence $\Lambda$ can be defined as
\begin{align*}
D^-(\Lambda) &:= \liminf_{r \to \infty} \inf_a \frac{\#(\Lambda,[a,a+r))}{r},
\end{align*}
where $\#(\Lambda,[a,a+r))$ denotes the number of elements $\lambda \in \Lambda \cap [a,a+r)$. Sampling sequences can then be characterized as follows \cite{bruna2001sampling}:
\begin{theorem}[E.g. \cite{bruna2001sampling,seip2004interpolation}]
\quad

\begin{itemize}
\item For $p \in (0,1)$ or $p=\infty$, 

	$\Lambda$ is sampling for $\mathcal{PW}^{p,b}_\mathbb{R}$ if and only if $D^-(\Lambda)>b$;
\item for $p \in (1,\infty),$

	$\Lambda$ is sampling for $\mathcal{PW}^{p,b}_\mathbb{R}$ if $D^-(\Lambda)>b$ and only if $D^-(\Lambda) \geq b$.
\end{itemize}
\end{theorem}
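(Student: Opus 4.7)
\emph{Plan.} I would prove the two directions separately, following the classical Beurling framework of weak limits of translated sets.

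\emph{Sufficiency ($D^-(\Lambda) > b$ implies sampling).} My target is the Plancherel--P\'olya type sampling inequality $\|f\|_p \leq C \,\|(f(\lambda))_{\lambda \in \Lambda}\|_{\ell^p}$, which in particular yields the uniqueness formulation used in the paper. I would argue by contradiction: if the inequality fails, extract $f_n \in \mathcal{PW}^{p,b}_\R$ with $\|f_n\|_p = 1$ while $\|(f_n(\lambda))_{\lambda \in \Lambda}\|_{\ell^p} \to 0$. Using translation invariance of $\mathcal{PW}^{p,b}_\R$, I would recenter $f_n$ and $\Lambda$ around a point $x_n$ where the $L^p$-mass of $f_n$ concentrates, then pass to weak limits simultaneously: a nonzero $f_\infty \in \mathcal{PW}^{p,b}_\R$ (via reflexivity of $L^p$ for $1 < p < \infty$; via Montel's theorem combined with uniform Bernstein/Paley--Wiener bounds for $p=\infty$ or $p \in (0,1)$), and a weak-limit point set $\Lambda_\infty$. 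Lower semicontinuity of the Beurling density under weak set-limits gives $D^-(\Lambda_\infty) \geq D^-(\Lambda) > b$, and by construction $f_\infty$ vanishes on $\Lambda_\infty$. Beurling's uniqueness principle -- a bounded entire function of exponential type at most $\pi b$ that vanishes on a set of lower density strictly greater than $b$ is identically zero -- then contradicts $f_\infty \ne 0$.

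\emph{Necessity.} I would contrapose: if $D^-(\Lambda)$ is too small, construct nonzero elements of $\mathcal{PW}^{p,b}_\R$ that destroy any sampling inequality (and in fact vanish on $\Lambda$). Assuming $D^-(\Lambda) < b$, the density definition furnishes arbitrarily long intervals $I_n = [a_n, a_n + r_n)$ with $\#(\Lambda \cap I_n) < b' r_n$ for some $b' < b$. Standard Paley--Wiener interpolation theory then produces entire functions of exponential type $\leq \pi b$ vanishing on $\Lambda \cap I_n$ and with $L^p$-mass concentrated on $I_n$; after a smooth spectral cutoff to keep the support of $\widehat{f}$ inside $[-b/2,b/2]$ and renormalization, these serve as counterexamples. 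The sharper strict inequality $D^-(\Lambda) > b$ required when $p = \infty$ or $p \in (0,1)$ demands additional critical-density constructions that exploit the failure of reflexivity of $L^\infty$ (respectively the quasi-Banach structure of $L^p$ for $p<1$); in the reflexive range $p \in (1,\infty)$ no such upgrade is possible, explaining the asymmetry of the stated theorem.

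\emph{Main obstacle.} The technical heart is the joint weak-limit step in the sufficiency argument: one must extract limits of functions and of point sets compatibly and verify that (a) $f_\infty$ remains in $\mathcal{PW}^{p,b}_\R$, (b) $f_\infty$ actually vanishes on $\Lambda_\infty$, and (c) the lower Beurling density is preserved in the limit. In the non-reflexive regimes $p \in \{\infty\} \cup (0,1)$, weak-$*$ compactness in $L^p$ is unavailable, so one replaces it by normal-family arguments for entire functions of exponential type together with pointwise Paley--Wiener estimates, and this interplay between analytic and measure-theoretic compactness is the delicate point.
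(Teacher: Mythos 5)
First, a point of orientation: the paper does not prove this statement at all. It is imported verbatim from the literature (Beurling's work and \cite{bruna2001sampling,seip2004interpolation}), and only the sufficiency half ($D^-(\Lambda)>b$ implies sampling) is ever used afterwards, in the injectivity theorem for $\mathcal{PW}^{p,b}_\R$ and its corollary. So there is no in-paper argument to compare yours against; I can only judge the proposal on its own terms. Your sufficiency sketch is the standard Beurling compactness scheme --- recenter, extract compatible weak limits of the functions and of the translated point set, use lower semicontinuity of $D^-$ under weak set-limits, and conclude with a uniqueness principle at supercritical density --- and as an outline it is sound; you correctly isolate the joint weak-limit step and the non-reflexive regimes $p\in(0,1)\cup\{\infty\}$ as the technical heart.

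The genuine gap is in the necessity direction, and it stems from a definitional mismatch you did not flag. The paper defines a \emph{sampling sequence} as a uniqueness set: $f(\lambda)=0$ for all $\lambda\in\Lambda$ forces $f=0$. The $D^-$ characterization you set out to prove is the classical one for \emph{stable} sampling, i.e.\ for the inequality $\|f\|_p\le C\,\|(f(\lambda))_{\lambda\in\Lambda}\|_{\ell^p}$. Under the paper's literal definition the ``only if'' direction is simply false: a bounded sequence that is dense in an interval is a uniqueness set for $\mathcal{PW}^{p,b}_\R$ (by continuity plus analyticity) yet has $D^-(\Lambda)=0$. Your construction of functions with $L^p$-mass concentrated on long intervals $I_n$ where $\Lambda$ is sparse does defeat the sampling \emph{inequality}, but the parenthetical claim that these functions ``in fact vanish on $\Lambda$'' is unjustified: they vanish only on $\Lambda\cap I_n$, and nothing prevents $\Lambda$ from being dense elsewhere, so uniqueness is untouched. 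Relatedly, the ``smooth spectral cutoff to keep $\supp\widehat{f}$ inside $[-b/2,b/2]$'' is not a legitimate repair: if the constructed function already has exponential type $\le\pi b$ and finite $L^p$-norm, Paley--Wiener already places its spectrum in $[-b/2,b/2]$; if it does not, multiplying $\widehat{f}$ by a cutoff is a non-local operation that destroys the prescribed zeros on $\Lambda\cap I_n$. Since the paper only ever invokes the sufficiency half (via $D^-(\Lambda)>2b$ in the corollary), the clean fix is to prove only that direction, or to restate the necessity half for the norm-inequality notion of sampling and leave it to the cited references.
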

We now state our following result that gives a general criterion on $\Lambda$ for injectivity of $\mathcal{A}_\Phi$.

\begin{theorem}
	Suppose that $\Lambda$ is a sampling sequence for 
	$\mathcal{PW}^{p/2,2b}_\R$. Then, 
	$$
		\mathcal{A}_\Phi:\mathcal{PW}^{p,b}_\R/\{1,-1\} \to \R_+^\Lambda
	$$ 
	is injective.
\end{theorem}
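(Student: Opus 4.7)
The plan is to reduce the statement to verifying the complement property for $\Phi$ in $\mathcal{PW}^{p,b}_\R$: since the underlying field is $\R$, Lemma~\ref{lem:CPsuff} then gives the desired injectivity for free.

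To prove the CP, I would argue by contradiction. Suppose there is a partition $\Lambda = S \cup (\Lambda\setminus S)$ for which both $(\mathrm{span}\,\Phi_S)_\bot$ and $(\mathrm{span}\,\Phi_{\Lambda \setminus S})_\bot$ are nontrivial; that is, there exist nonzero $f,g \in \mathcal{PW}^{p,b}_\R$ with $f(\lambda)=0$ for all $\lambda \in S$ and $g(\lambda)=0$ for all $\lambda \in \Lambda \setminus S$. The key idea is to pass to the pointwise product $h := fg$. Since $\widehat{fg} = \widehat{f}*\widehat{g}$, the Fourier support of $h$ lies in $[-b/2,b/2]+[-b/2,b/2] = [-b,b]$, and H\"older's inequality (or Cauchy--Schwarz applied to $|f|^{p/2},|g|^{p/2}$ when $p<2$) yields $\|h\|_{p/2} \leq \|f\|_p\|g\|_p < \infty$, so $h \in \mathcal{PW}^{p/2,2b}_\R$. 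By construction $h(\lambda)=0$ for every $\lambda \in \Lambda$, because either $f(\lambda)=0$ or $g(\lambda)=0$ depending on whether $\lambda \in S$ or not. The sampling hypothesis therefore forces $h \equiv 0$.

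To extract a contradiction from $fg \equiv 0$, I would invoke the Paley--Wiener theorem: both $f$ and $g$ extend to entire functions of exponential type and are in particular real-analytic on $\R$. A nonzero real-analytic function has only isolated zeros, so if both $f$ and $g$ were nonzero, then $fg$ would be nonzero outside a set of measure zero, contradicting $h \equiv 0$. Hence $f=0$ or $g=0$, establishing the CP.

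The main point to get right --- rather than a serious obstacle --- is the bookkeeping of how bandwidth and integrability interact under multiplication: the product of two functions in $\mathcal{PW}^{p,b}_\R$ lives in $\mathcal{PW}^{p/2,2b}_\R$ and generally nowhere better, which is precisely why the hypothesis on $\Lambda$ is phrased in terms of $\mathcal{PW}^{p/2,2b}_\R$ rather than $\mathcal{PW}^{p,b}_\R$ itself. Once this is set up correctly, the argument is a short combination of the CP characterization of injectivity and the analyticity of band-limited functions.
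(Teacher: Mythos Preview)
Your proposal is correct and follows essentially the same route as the paper: reduce to the CP via Lemma~\ref{lem:CPsuff}, multiply the two offending functions to land in $\mathcal{PW}^{p/2,2b}_\R$, invoke the sampling hypothesis to force the product to vanish identically, and then use analyticity of band-limited functions to conclude one factor is zero. The only cosmetic difference is that the paper phrases the last step by picking a zero-free subinterval for one factor, whereas you appeal directly to isolated zeros of nonzero real-analytic functions; you also spell out the H\"older/Paley--Wiener ingredients that the paper leaves implicit.
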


\begin{proof}
	By Lemma \ref{lem:CPsuff} we need to show that $\Phi$
	satisfies the CP. Suppose that the CP does not hold in 
	$\mathcal{PW}^{p,b}_\R$. Then there exist $u,v\in 
	\mathcal{PW}^{p,b}_\R\setminus \{0\}$ and $S\subset \Lambda$ with
	$u(S) = 0$ and $v(\Lambda\setminus S)= 0$. 
	Let $h:=u\cdot v\in \mathcal{PW}^{p/2,2b}_\R$. We note that $u,v$ and $h$ are holomorphic functions. If $h$ vanishes on an open interval $\Omega \subset \R$, then there is a subinterval $U \subset \Omega$ on which $u$ has no zeros. Since $u \cdot v = 0$ on $U$, this implies $v=0$ on $U$. Therefore, $v$ has to vanish identically on $\R$, which is a contradiction to our assumption. We can thus deduce that $h \in \mathcal{PW}^{p/2,2b}_\R\setminus \{0\}$. By the properties of $u$ and $v$, we also have $h(\Lambda)=0$, which contradicts the sampling condition.
\end{proof}

\begin{corollary}
If $\Lambda \subset \mathbb{R}$ is a sequence with lower Beurling density  $D^-(\Lambda)>2b$, then 
$$
		\mathcal{A}_\Phi:\mathcal{PW}^{p,b}_\R/\{1,-1\} \to \R_+^\Lambda
$$
is injective.
\end{corollary}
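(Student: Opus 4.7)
The plan is to deduce the corollary directly from the preceding theorem, which reduces the problem to verifying that $\Lambda$ is a sampling sequence for $\mathcal{PW}^{p/2,2b}_\R$; once this is established, injectivity of $\mathcal{A}_\Phi$ on $\mathcal{PW}^{p,b}_\R/\{1,-1\}$ follows immediately.

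To verify the sampling property, I would invoke the cited Bruna--Seip characterization, applied with parameters $p/2$ and $2b$ in place of $p$ and $b$. The sufficient condition in all cases of that characterization is $D^-(\Lambda) > 2b$, which holds by hypothesis. I would split the argument by the range of $p/2$: when $p \in (1,2)$ we have $p/2 \in (1/2, 1)$, falling into the first bullet of the theorem, while for $p \in (2,\infty)$ we have $p/2 \in (1, \infty)$ and fall into the second bullet; in both ranges the condition $D^-(\Lambda) > 2b$ is sufficient for $\Lambda$ to be a sampling sequence for $\mathcal{PW}^{p/2,2b}_\R$.

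The only genuinely delicate point is the borderline case $p=2$, where $p/2 = 1$ lies strictly outside the range covered by the stated theorem. I would handle this either by noting that $\mathcal{PW}^{1,2b}_\R \subset \mathcal{PW}^{q,2b}_\R$ for suitable $q>1$ of interest (so that a sampling sequence for the larger space is also sampling for the subspace), or more directly by observing that the product $h = u \cdot v$ of two functions in $\mathcal{PW}^{2,b}_\R$ lies in $\mathcal{PW}^{1,2b}_\R$ but, being the Fourier transform of a compactly supported $L^1$ function (the convolution $\hat u * \hat v$), is in fact continuous and bounded, hence in $\mathcal{PW}^{q,2b}_\R$ for every $q \geq 1$. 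Choosing any $q \in (1,\infty)$ with $D^-(\Lambda) > 2b$ then lets us apply the second bullet of the Bruna--Seip theorem to conclude $h \equiv 0$, which as in the proof of the preceding theorem forces $u=0$ or $v=0$, contradicting the failure of CP. Apart from this borderline bookkeeping, the proof is essentially a one-line combination of two cited results.
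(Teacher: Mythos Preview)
Your proposal is correct and follows exactly the route the paper intends: the corollary is stated without proof, and the implicit argument is precisely the combination of the preceding theorem with the Bruna--Seip density criterion applied with exponent $p/2$ and bandwidth $2b$. Your treatment of the borderline case $p=2$ (where $p/2=1$ falls outside the ranges explicitly listed in the quoted sampling theorem) is more careful than anything the paper spells out; either of your two workarounds is valid, and in fact they amount to the same observation that band-limited $L^1$ functions are automatically bounded, hence lie in $\mathcal{PW}^{q,2b}_\R$ for every $q\ge 1$.
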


\begin{remark}
We note that the result in \cite{thakur} on the injectivity of $\mathcal{A}_\Phi$ required the sequence $\Lambda$ to have uniform density, which is a stronger condition than $\Lambda$ being a sampling sequence.
\end{remark}
\section{Stability}\label{sec:stability}
Given a representation (or measurement) system $\Phi$ satisfying the CP, it
is interesting to study whether the inversion of $\mathcal{A}_{\Phi}$
is stable.
To this end we need to put a metric structure on elements of $\K^\Lambda$.  We do this by assuming that the measurements
$\mathcal{A}_{\Phi}(x)$, $x\in \mathcal{B}$, always lie in 
 a fixed Banach space $\BL \subset \K^\Lambda$ with norm $\| \cdot \|_{\BL}$. Now we can consider $\mathcal{A}_\Phi$
 as a mapping between two Banach spaces $\mathcal{B}$ and $\BL$ and the question of continuity makes sense.
 
 To avoid pathologies we impose the following weak conditions on
 the space $\BL$. In what follows we shall write
 $\chi_S$ for the indicator function of $S\subset \Lambda$.

\begin{definition}[Admissible Banach space]\label{assump}
	Let $\Lambda$ be a $\sigma$-compact topological space. A Banach space $\BL \subset \K^\Lambda$ over $\K$  is called \emph{admissible} if it satisfies the following properties:
	\begin{itemize}
	\item[(i)]For every compact set $\Lambda'\subset \Lambda$
		the norm of the indicator function is finite, i.e., 
		$$
			\| \chi_{\Lambda'}\|_\BL<\infty.
		$$

		\item[(ii)]$\BL$ is \emph{solid}, i.e., it holds that
		\begin{equation}
			\|w\|_{\BL}=\| |w| \|_{\BL}\le
			\| z\|_{\BL}
		\end{equation}
		for all $z\in\BL$ with $|z(\lambda)| \geq |w(\lambda)|$ for all $\lambda \in \Lambda$.
		\item[(iii)] 
		The set of elements in $\BL$ with compact support is dense in $\BL$.
			\end{itemize}
\end{definition}
Note that the assumptions in Definition \ref{assump} are truly weak: for example they are satisfied by every weighted $L^p$ space.

We record the following useful property that we will use in our proofs and which follows directly from the solidity condition:

\begin{equation}\label{subset-ineq}
\|w\|_{\BL} \geq \|w \chi_{S}\|_{\BL} \text{ for all } w \in \BL \text{ and } S \subset \Lambda.
\end{equation}
 In order to ensure that the image of $\mathcal{B}$ under the measurement mapping $\mathcal{A}_\Phi$ is contained in $\BL$ we assume further that the representation system $\Phi$ constitutes a Banach frame for $\mathcal{B}$, as
 introduced in the following definition.
 \begin{definition}[Banach frame]\label{def:frame}
Let $\Phi:=(\varphi_\lambda)_{\lambda \in \Lambda}\subset \B'$ be a family of bounded linear functionals such that the mapping $\lambda \mapsto \varphi_\lambda$ is continuous.
	Suppose there is an admissible Banach space $\BL \subset \K^{\Lambda}$ with norm $\| \cdot \|_{\BL}$ and with the following properties:
	\begin{itemize}
	\item[(B1)]
	For some real constants $B \geq A>0,$ 
	$$
		A\|x\|_\B \le \|[x,\Phi] \|_{\BL}\le 
		B\|x\|_\B
	$$
	holds for all $x\in \mathcal{B}$.
	\item[(B2)]
	There exists a continuous operator $R:\BL \to \mathcal{B}$ with 
	$$
		R([ x,\Phi]) = x\quad
		\mbox{for all }x\in \mathcal{B}.
	$$
	\end{itemize}
	Then, $\Phi$ is called a \emph{frame} for $\mathcal{B}$. We denote the best possible such constants by
	$A_{\Phi,\mathcal{B},\BL}, \ B_{\Phi,\mathcal{B},\BL}$ and call them the \emph{frame constants} of $\Phi$.
\end{definition}
Again the assumption that $\Phi$ constitutes a frame is very weak; in fact it is necessary for stability of the reconstruction even if 
the phases of $[x,\varphi_\lambda]$ were known for all $\lambda \in \Lambda$.

Having introduced the fundamental concepts of admissible Banach spaces and Banach frames we can now make the question of stability precise:
\begin{quote}
	Suppose that $\Phi$ constitutes a frame as in Definition \ref{def:frame} and that the measurement mapping $\mathcal{A}_\Phi$
	is injective.
	\begin{itemize}
		\item[(Q1)] Is the mapping $\mathcal{A}_\Phi:\mathcal{B}\to \BL$ continuously invertible on its range?
		\item[(Q2)]Is the mapping $\mathcal{A}_\Phi:\mathcal{B}\to \BL$ uniformly continuously invertible on its range?
	\end{itemize}
\end{quote}
In the remainder of this section we will show that
\begin{itemize}
\item The answer to (Q1) is always yes.
\item The answer to (Q2) is yes if and only if $\mathcal{B}$ is finite dimensional.
\end{itemize}
In addition we will also obtain some quantitative estimates on the Lipschitz constants of $\mathcal{A}_\Phi^{-1}$.
\subsection{Weak Stability}\label{sec:weak-stability}
 
We first show that the inverse $\mathcal{A}_\Phi^{-1}$ is always continuous, 
whenever it exists and thereby answer (Q1) affirmatively.
\begin{theorem}\label{thm:weakstab}
	Suppose that $\Phi$ constitutes a frame for $\mathcal{B}$ with associated admissible Banach space $\BL$, and that $\mathcal{A}_\Phi$ is injective. Then the 
	operator $\mathcal{A}_\Phi^{-1}$ is continuous on its range. 
\end{theorem}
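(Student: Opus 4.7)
The strategy is to argue by contradiction, combining the frame bounds, solidity of $\BL$, and the forthcoming characterization of relative compactness in solid Banach spaces (Theorem \ref{thm:totalbound}). Suppose $\mathcal{A}_\Phi^{-1}$ is not continuous at some point of its range: then there exist $x_0 \in \B$, $\eps > 0$, and a sequence $(x_n) \subset \B$ with $\|\mathcal{A}_\Phi(x_n) - \mathcal{A}_\Phi(x_0)\|_\BL \to 0$ while $d_\B(x_n, x_0) \geq \eps$ for every $n$.

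A first preparation uses solidity of $\BL$ to observe $\|[x_n,\Phi]\|_\BL = \||[x_n,\Phi]|\|_\BL = \|\mathcal{A}_\Phi(x_n)\|_\BL$. Since convergent sequences are bounded, this together with the lower frame bound in Definition \ref{def:frame} yields a uniform bound on $\|x_n\|_\B$, and the upper frame bound then shows that $([x_n,\Phi])_{n \in \N}$ is itself bounded in $\BL$.

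The heart of the argument is to upgrade the convergence of the moduli $|[x_n,\Phi]|$ in $\BL$ into a subsequential convergence of the signed coefficients $[x_n,\Phi]$ themselves. I would apply Theorem \ref{thm:totalbound} to the set $\{[x_n,\Phi] : n \in \N\}$ to establish its relative compactness in $\BL$. The key input is that $[x_n,\Phi]$ and $|[x_n,\Phi]|$ agree in pointwise modulus, so by solidity the compactness criteria supplied by the $\BL$-convergence of $|[x_n,\Phi]|$ transfer to $[x_n,\Phi]$ itself. This furnishes a subsequence with $[x_{n_k},\Phi] \to \omega$ in $\BL$ for some $\omega \in \BL$.

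From here the conclusion is immediate. By continuity of the reconstruction operator $R: \BL \to \B$ from Definition \ref{def:frame}, the elements $x_{n_k} = R([x_{n_k},\Phi])$ converge in norm to $y := R(\omega)$. Applying solidity to the pointwise estimate $\bigl||[x_{n_k},\Phi]| - |\omega|\bigr| \leq |[x_{n_k},\Phi] - \omega|$ gives $|[x_{n_k},\Phi]| \to |\omega|$ in $\BL$, so by uniqueness of limits $|\omega| = |[x_0,\Phi]|$; that is, $\mathcal{A}_\Phi(y) = \mathcal{A}_\Phi(x_0)$. Injectivity then forces $y = \tau x_0$ for some unimodular $\tau \in \K$, giving $d_\B(x_{n_k}, x_0) \leq \|x_{n_k} - \tau x_0\|_\B \to 0$, a contradiction. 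The principal obstacle is precisely this compactness step via Theorem \ref{thm:totalbound}: extracting from the $\BL$-convergence of the moduli a relatively compact family of the signed coefficients themselves, since $\BL$ is a general solid Banach space in which dominated-convergence arguments need not apply and weak subsequential extraction in $\B$ (even under reflexivity) would not by itself deliver the required norm convergence.
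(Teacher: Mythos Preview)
Your overall architecture matches the paper's: bound the sequence via the frame inequality, pass from relative compactness of the moduli to relative compactness of the signed coefficients, extract a convergent subsequence, apply $R$, and use injectivity. However, there is a genuine gap at the compactness step.

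You write that ``by solidity the compactness criteria supplied by the $\BL$-convergence of $|[x_n,\Phi]|$ transfer to $[x_n,\Phi]$ itself.'' Solidity does transfer boundedness (RC1) and uniform tail decay (RC2) from $|\mathfrak{K}|$ to $\mathfrak{K}$, but these two conditions alone do \emph{not} characterize relative compactness in an admissible $\BL$: the characterization in the appendix requires in addition that $\mathfrak{K}$ be \emph{strongly equicontinuous}. A concrete obstruction: take $\BL = L^2([0,1])$ and $w_n(t)=e^{2\pi i n t}$. Then $|\mathfrak{K}|=\{1\}$ is a singleton, hence trivially compact, while $\mathfrak{K}=\{w_n\}$ is orthonormal and has no convergent subsequence. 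So solidity by itself cannot bridge the gap from $|\mathfrak{K}|$ to $\mathfrak{K}$.

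The paper supplies exactly this missing ingredient. After obtaining the uniform bound $\|x_k\|_\B\le c$ (which you also have), it invokes the continuity of $\lambda\mapsto\varphi_\lambda$ built into Definition~\ref{def:frame}: from
\[
|[x_k,\varphi_\lambda]-[x_k,\varphi_\mu]|\le \|x_k\|_\B\,\|\varphi_\lambda-\varphi_\mu\|_{\B'}\le c\,\|\varphi_\lambda-\varphi_\mu\|_{\B'}
\]
one deduces that $\mathfrak{K}=\{[x_k,\Phi]\}$ is strongly equicontinuous. Only then does Corollary~\ref{cor:relcomp} apply to conclude that $\mathfrak{K}$ is relatively compact from the relative compactness of $|\mathfrak{K}|$. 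Your proposal never mentions equicontinuity or the continuity of $\lambda\mapsto\varphi_\lambda$, and without it the central step fails.

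A minor secondary point: from $x_{n_k}\to y:=R(\omega)$ in $\B$ you should first use the upper frame bound to get $[x_{n_k},\Phi]\to[y,\Phi]$ in $\BL$, hence $\omega=[y,\Phi]$, before concluding $\mathcal{A}_\Phi(y)=|\omega|$. This is routine, but as written you pass directly from $|\omega|=|[x_0,\Phi]|$ to $\mathcal{A}_\Phi(y)=\mathcal{A}_\Phi(x_0)$ without this identification.
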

\begin{proof}
	We need to show that for every Cauchy sequence $(\mathcal{A}_\Phi(x_k))_{k\in \N}$ in $\BL$ the sequence
	$(x_k)_{k \in \N}$ converges in $\mathcal{B}$. If  $(\mathcal{A}_\Phi(x_k))_{k\in \N}$ is a Cauchy sequence, the set
	$|\mathfrak{K}|:=\{|[x_k,\Phi]|:\ k\in \N\}$ is relatively compact in
	$\BL$. Since $\BL$ is a Banach space, relative compactness in $\BL$ is equivalent to total boundedness in $\BL$. Hence, $|\mathfrak{K}|$ is bounded, which, by the frame property, implies that $\{x_k:\ k\in \N\}$ is bounded in $\mathcal{B}$.
	This, together with the continuity of the mapping $\lambda\mapsto \varphi_\lambda$, implies the strong equicontinuity (see Def. \ref{def:equicont}) of
	the family $\mathfrak{K}:=\{[x_k,\Phi]:\ k\in \N\}$, which can be seen as follows:
	
	By the boundedness of $\{x_k:\ k\in \N\}$, there exists a constant $c>0$, such that for all $k \in \N$
	$$
		\| x_k \|_\B \leq c.
	$$
	The continuity of the mapping $\lambda\mapsto \varphi_\lambda$ yields that for all $\eps >0$ there is a neighborhood $U_\eps(\lambda) \subset \Lambda$ such that
	$$
		\| \varphi_\lambda-\varphi_\mu \|_{\B'} \leq \frac{\eps}{c}
	$$
	for all $\mu \in U_\eps(\lambda)$.
	Therefore,
	$$
		\sup_{k \in \N} |[x_k,\varphi_\lambda]-[x_k,\varphi_\mu]| = \sup_{k \in \N} |[x_k,\varphi_\lambda-\varphi_\mu]|\leq c \| \varphi_\lambda - \varphi_\mu \|_{\B'} \leq \eps
	$$
	for all $\mu \in U_\eps(\lambda)$, which shows that $\mathfrak{K}$ is strongly equicontinuous.
	
	Now we can use Corollary \ref{cor:relcomp} and the relative compactness of $|\mathfrak{K}|$ to conclude that
	the set
	$$
		\mathfrak{K}:=\{ [x_k,\Phi]:\ k\in\N\}
	$$
	is relatively compact in $\BL$. 
	Thus, there exists a convergent subsequence 
	$$
		([ x_{k_l},\Phi])_{l\in\N} 
	$$
	in $\BL$.
	
	Together with the frame property this implies that the sequence
	$$
		x_{k_l}=R([x_{k_l},\Phi])
	$$
	is a Cauchy sequence in $\mathcal{B}$, converging to some $x\in \mathcal{B}$.
	Since $\mathcal{A}_\Phi$ is continuous, it follows that 
	$$
		\lim_{l\to\infty}\mathcal{A}_\Phi(x_{k_l})=\mathcal{A}_\Phi(x).
	$$
	Suppose we choose another convergent subsequence converging to a different $x'$. Then, by the injectivity of $\mathcal{A}_\Phi$ we have
	that $x'=x$. Consequently, the full sequence $(x_k)_{k \in \mathbb{N}}$ converges.
\end{proof}
\subsection{Strong Stability}\label{sec:strong-stability}
The continuity result which we have just shown is mainly of theoretical interest, due to the fact that it is non-quantitative. 
In order to obtain quantitative results we 
look for the greatest possible $\alpha>0$ and the
smallest possible $\beta<\infty$ for which
$$
	\alpha d_\mathcal{B}(x,y)\le \left\|\mathcal{A}_\Phi(x) - \mathcal{A}_\Phi(y)\right\|_{\BL}
	\le \beta d_\mathcal{B} (x,y).
$$
We call the best possible such constants $\alpha_{\Phi,\mathcal{B},\BL},\  \beta_{\Phi,\mathcal{B},\BL}$.

Clearly, the stability of the inversion of $\mathcal{A}_\Phi$
is determined by the ratio of these constants.
\begin{definition}[Condition number]
	We call the constant
	$$
		\tau_{\Phi,\mathcal{B},\BL}:=\frac{\beta_{\Phi,\mathcal{B},\BL}}{\alpha_{\Phi,\mathcal{B},\BL}}
	$$
	the \emph{condition number} of $\mathcal{A}_\Phi$ related to $\mathcal{B}, \BL$.
\end{definition}
As a first step we show that the upper Lipschitz constant $\beta_{\Phi,\mathcal{B},\BL}$ is precisely given by the
upper frame constant of $\Phi$.
\begin{theorem}\label{stabup-real}
	Suppose that $\Phi$ constitutes a frame for $\mathcal{B}$. 
	Then, 
	$$
		\beta_{\Phi,\mathcal{B},\BL}=B_{\Phi,\mathcal{B},\BL}.
	$$
\end{theorem}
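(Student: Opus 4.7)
The plan is to establish both inequalities $\beta_{\Phi,\B,\BL} \le B_{\Phi,\B,\BL}$ and $\beta_{\Phi,\B,\BL} \ge B_{\Phi,\B,\BL}$ using the pointwise reverse triangle inequality together with the two axioms on $\BL$ provided by Definition~\ref{assump}: solidity and the fact that $\|w\|_\BL = \||w|\|_\BL$.

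For the upper bound $\beta \le B$, I would start from the elementary scalar inequality $\bigl||a|-|b|\bigr| \le |a-b|$, applied coordinate-wise at each $\lambda \in \Lambda$, yielding
$$
\bigl|\,|[x,\varphi_\lambda]|-|[y,\varphi_\lambda]|\,\bigr| \;\le\; |[x-\tau y,\varphi_\lambda]|
$$
for any $\tau \in \K$ with $|\tau|=1$ (since modifying $y$ by a unimodular factor does not change $|[y,\varphi_\lambda]|$). Solidity of $\BL$ then gives
$$
\|\mathcal{A}_\Phi(x)-\mathcal{A}_\Phi(y)\|_\BL \;\le\; \|[x-\tau y,\Phi]\|_\BL \;\le\; B_{\Phi,\B,\BL}\,\|x-\tau y\|_\B,
$$
where the last step uses the upper frame bound (B1). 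Taking the infimum over admissible $\tau$ produces $d_\B(x,y)$ on the right-hand side, which shows $\beta_{\Phi,\B,\BL} \le B_{\Phi,\B,\BL}$.

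For the reverse inequality I would simply specialize to $y=0$. Since $\mathcal{A}_\Phi(0)=0$ and $d_\B(x,0)=\|x\|_\B$ (the set of unimodular $\tau$ acts trivially on $0$), the definition of $\beta$ yields
$$
\|[x,\Phi]\|_\BL = \||[x,\Phi]|\|_\BL = \|\mathcal{A}_\Phi(x)-\mathcal{A}_\Phi(0)\|_\BL \le \beta_{\Phi,\B,\BL}\,\|x\|_\B
$$
for every $x \in \B$. By the optimality of $B_{\Phi,\B,\BL}$ as the upper frame constant, this gives $B_{\Phi,\B,\BL} \le \beta_{\Phi,\B,\BL}$, and the two inequalities combine to the claim.

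I expect no genuine obstacle here: the argument is essentially the pointwise reverse triangle inequality lifted to $\BL$ via solidity, and the reverse direction is a one-line specialization at $y=0$. The only small technical point to keep clean is the role of the quotient by unimodular scalars, which is resolved by observing that $|[y,\varphi_\lambda]|=|[\tau y,\varphi_\lambda]|$ for every $|\tau|=1$ so that one is free to take the infimum over $\tau$ after invoking solidity.
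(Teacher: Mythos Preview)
Your proposal is correct and follows essentially the same route as the paper: the pointwise reverse triangle inequality combined with solidity and the upper frame bound gives $\beta_{\Phi,\B,\BL}\le B_{\Phi,\B,\BL}$, and the specialization $y=0$ yields the reverse inequality. The only cosmetic difference is that the paper first bounds by the pointwise minimum over $\tau$ and then pulls the minimum outside via solidity, whereas you fix $\tau$ first and then take the infimum; for the converse, your formulation via the optimality of $B_{\Phi,\B,\BL}$ is in fact slightly cleaner than the paper's appeal to an $x$ that ``saturates'' the upper frame bound, which need not be attained.
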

\begin{proof}
		By the solidity of $\BL$ and using the reverse triangle inequality, we have
		\begin{align*}
			\|\mathcal{A}_\Phi(x)-\mathcal{A}_\Phi(y)\|_{\BL}
			&= \| (|[ x,\varphi_\lambda]|-|[y,\varphi_\lambda]|)_{\lambda\in\Lambda}\|_{\BL}\\
			&\le 
			\|(\min\{|[x-\tau y,\varphi_\lambda]|: |\tau|=1\})_{\lambda\in\Lambda}\|_{\BL}\\
			&\le 
			\min\{\|(|[x-\tau y,\varphi_\lambda]|)_{\lambda\in\Lambda}\|_{\BL}:|\tau|=1\}\\
			&\le 
			B_{\Phi,\mathcal{B},\BL} d_{\mathcal{B}}(x,y),
		\end{align*}
		where the last inequality follows from the frame property. Hence, $\betap \leq \Bp$. The converse inequality follows from taking $x\in \mathcal{B}$ which saturates
		the upper frame bound and $y=0$.
\end{proof}
In order to study the lower Lipschitz bound $\alpha_{\Phi,\mathcal{B},\BL}$ the following quantitative version of the CP has been introduced in \cite{Bandeira_SavingPhase} in the finite dimensional real-valued setting.
\begin{definition}[Strong complement property]
	The system $\Phi = (\varphi_\lambda)_{\lambda \in \Lambda} \subset \B'$ satisfies the \emph{$\sigma$-strong complement property} (SCP) in $\mathcal{B}$ if there exists $\sigma >0$ such that for every subset $S\subset \Lambda$, we have
	$$
		\max\{A_{\Phi_S,\mathcal{B},\BL},A_{\Phi_{\Lambda\setminus S},\mathcal{B},\BL}\}\geq \sigma.
	$$
	We denote the supremal $\sigma$ for which $\Phi$ satisfies the SCP by $\sigma_{\Phi,\mathcal{B},\BL}$.
\end{definition}
In what follows we show that if $\Phi$ constitutes a frame, then in the complex-valued setting (i.e., when $\K=\C$) the SCP is a necessary condition for the stability of Problem \ref{def-problem} (i.e., for the condition $\tau_{\Phi,\mathcal{B},\BL}<\infty$). This result can be strengthened in the real-valued setting ($\K=\R$). In this case we prove that the SCP is also a sufficient condition for the stability of Problem \ref{def-problem}. 
\subsubsection{The real case}
We first formulate the simpler result in the real-valued setting for which the proof roughly follows the lines of \cite{Bandeira_SavingPhase}.
\begin{theorem}\label{stablow-real}
	Suppose that $\B$ is a Banach space over $\R$ and that $\Phi = (\varphi_\lambda)_{\lambda \in \Lambda} \subset \B'$ constitutes a frame of $\mathcal{B}$. Then
	$$
		\sigma_{\Phi,\mathcal{B},\BL} \le \alpha_{\Phi,\mathcal{B},\BL} \le 2\sigma_{\Phi,\mathcal{B},\BL}.
	$$
\end{theorem}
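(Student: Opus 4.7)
The plan is to prove the two inequalities $\sigmap\le\alphap$ and $\alphap\le 2\sigmap$ separately, mirroring the finite-dimensional template of Bandeira et al. but replacing inner-product/sign computations by the solidity axiom of $\BL$ and the subset inequality \eqref{subset-ineq}.

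For the lower bound $\sigmap\le\alphap$, I would fix arbitrary $x,y\in\B$, set $u:=x-y$, $v:=x+y$, and partition $\Lambda$ into
\[
S:=\{\lambda\in\Lambda:\ [x,\varphi_\lambda][y,\varphi_\lambda]\ge 0\},\qquad T:=\Lambda\setminus S.
\]
In the real case the elementary identity $||a|-|b||=|a-b|$ (resp.\ $|a+b|$) when $ab\ge 0$ (resp.\ $ab<0$) gives, pointwise,
\[
\bigl||[x,\varphi_\lambda]|-|[y,\varphi_\lambda]|\bigr|=\chi_S(\lambda)\,|[u,\varphi_\lambda]|+\chi_T(\lambda)\,|[v,\varphi_\lambda]|.
\]
Then \eqref{subset-ineq} yields $\|\mathcal{A}_\Phi(x)-\mathcal{A}_\Phi(y)\|_\BL\ge\max\bigl(\|\chi_S[u,\Phi]\|_\BL,\|\chi_T[v,\Phi]\|_\BL\bigr)$, which by the definition of the (restricted) lower frame constants is at least $\max(A_{\Phi_S,\B,\BL}\|u\|_\B,A_{\Phi_T,\B,\BL}\|v\|_\B)$. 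The SCP says that whichever of $A_{\Phi_S,\B,\BL}$, $A_{\Phi_T,\B,\BL}$ is the larger is $\ge\sigmap$, and pairing it with the corresponding $\|u\|_\B$ or $\|v\|_\B$ gives a lower bound of $\sigmap\min(\|u\|_\B,\|v\|_\B)=\sigmap\,d_\B(x,y)$.

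For the upper bound $\alphap\le 2\sigmap$, I would use the infimum characterization $\sigmap=\inf_{S\subset\Lambda}\max(A_{\Phi_S,\B,\BL},A_{\Phi_{\Lambda\setminus S},\B,\BL})$. For any $\eps>0$ pick $S$ with both restricted lower constants less than $\sigmap+\eps$, and then pick unit-norm $u,v\in\B$ realizing $\|[u,\Phi_S]\|_\BL<\sigmap+2\eps$ and $\|[v,\Phi_{\Lambda\setminus S}]\|_\BL<\sigmap+2\eps$. Set $x:=u+v$, $y:=u-v$, so that $x-y=2v$ and $x+y=2u$. The universally valid pointwise estimate $||a|-|b||\le\min(|a-b|,|a+b|)$ yields
\[
\bigl||[x,\varphi_\lambda]|-|[y,\varphi_\lambda]|\bigr|\le 2\bigl(\chi_S(\lambda)\,|[u,\varphi_\lambda]|+\chi_T(\lambda)\,|[v,\varphi_\lambda]|\bigr),
\]
and solidity plus the triangle inequality give $\|\mathcal{A}_\Phi(x)-\mathcal{A}_\Phi(y)\|_\BL\le 2(\|[u,\Phi_S]\|_\BL+\|[v,\Phi_{\Lambda\setminus S}]\|_\BL)<4(\sigmap+2\eps)$. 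Since $d_\B(x,y)=2\min(\|u\|_\B,\|v\|_\B)=2$, this shows $\alphap\le 2(\sigmap+2\eps)$, and $\eps\to 0$ concludes.

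The only delicate point I foresee is making precise the meaning of $\|[u,\Phi_S]\|_\BL$ and of the constant $A_{\Phi_S,\B,\BL}$ when $\Phi_S$ is not itself a frame: it has to be read as the norm of $\chi_S[u,\Phi]$ in $\BL$ (well-defined by solidity, and finite on compactly supported approximants by Definition \ref{assump}(i),(iii)), respectively as the infimum $\inf_{u\ne 0}\|\chi_S[u,\Phi]\|_\BL/\|u\|_\B$. With that convention in place both inequalities go through cleanly; the real-valued hypothesis is used only in the identity that splits $||[x,\varphi_\lambda]|-|[y,\varphi_\lambda]||$ according to sign in the lower-bound step.
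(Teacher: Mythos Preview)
Your argument is correct and follows essentially the same route as the paper: the same pointwise sign-splitting identity for the lower bound and the same $x=u+v$, $y=u-v$ construction with the reverse triangle inequality for the upper bound. The only cosmetic difference is that for $\sigmap\le\alphap$ you argue directly (for all $x,y$), whereas the paper argues contrapositively (pick $\alpha>\alphap$, find near-saturating $x,y$, and exhibit a set $S$ with $\max\{A_{\Phi_S},A_{\Phi_{\Lambda\setminus S}}\}\le\alpha$); your explicit remark on how to read $A_{\Phi_S,\mathcal{B},\BL}$ as an infimum when $\Phi_S$ is not a frame is a clarification the paper leaves implicit.
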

\begin{proof}
	We first show that $\alpha_{\Phi,\mathcal{B},\BL} \le 2\sigma_{\Phi,\mathcal{B},\BL}$. Let $\sigma > \sigma_{\Phi,\mathcal{B},\BL}$.
		There exist $S\subset \Lambda$ and $u,v\in \mathcal{B}$ with $\|u\|_{\mathcal{B}}=1=\|v\|_{\mathcal{B}}$ such that
		$$
			 \|[u,\Phi_S] \|_{\BL} \leq \sigma
		$$
		and
		$$
			\|[v,\Phi_{\Lambda\setminus S}] \|_{\BL} \leq \sigma.
		$$
		Put $x:=u+v$ and $y:=u-v$. It holds that 
		\begin{align*}
			\|\mathcal{A}_\Phi(x) - \mathcal{A}_\Phi(y)\|_{\BL}
			&\le 
			 \||[x,\Phi_S]| - |[y,\Phi_S]|\|_{\BL}
			 +\||[x,\Phi_{\Lambda\setminus S}]| - |[y,\Phi_{\Lambda\setminus S}]|\|_{\BL}
			 \\
			 &\le  2 \|[u,\Phi_S]\|_{\BL}
			 + 2 \|[v,\Phi_{\Lambda\setminus S}]\|_{\BL},
%
		\end{align*}
		where the last inequality follows from the reverse triangle inequality (cf. \cite{Bandeira_SavingPhase}) and the solidity of $\BL$.
		By our assumptions on $u,v$ this implies that
		$$
			\|\mathcal{A}_\Phi(x) - \mathcal{A}_\Phi(y)\|_{\BL}\le 2\sigma (\|u\|_\B+\|v\|_\B)
			\le 4\sigma \min\{\|u\|_\B,\|v\|_\B\}
			= 2\sigma d_\mathcal{B}(x,y).
		$$
		Since $\sigma>\sigma_{\Phi,\mathcal{B},\BL}$ was arbitrary the desired inequality is proven.
		
		It remains to show the inequality $\sigma_{\Phi,\mathcal{B},\BL} \le \alpha_{\Phi,\mathcal{B},\BL},$ for which we consider $\alpha >\alpha_{\Phi,\mathcal{B},\BL}$. By the optimality
		of $\alpha_{\Phi,\mathcal{B},\BL}$, there exist
		$x,y\in \mathcal{B}$ with 
		$$
			\alpha d_\mathcal{B}(x,y)>
			\|\mathcal{A}_\Phi(x)-\mathcal{A}_\Phi(y)\|_{\BL}.
		$$
		Now pick
		$$
			S:=\{\lambda \in \Lambda:\ \mbox{sign}([x,\varphi_\lambda]) = -
			\mbox{sign}([ y,\varphi_\lambda])\}.
		$$
		Define $u:=x+y$ and $v:=x-y$. 
		It holds that
		\begin{align*}
			\|[u,\Phi_S]\|_{\BL}& = 
			\||[x,\Phi_S]|-|[y,\Phi_S]|\|_{\BL}
			\\
			&\le  \|\mathcal{A}_\Phi(x)-\mathcal{A}_\Phi(y)\|_{\BL}\\
			&\le  
			\alpha d_{\mathcal{B}}(x,y) \le \alpha \|u\|_\B,
		\end{align*}
		where the first inequality follows from (\ref{subset-ineq}). %
		The same argument can be carried out for $v$ and $\Lambda\setminus S$. This implies 
		$$
		\max \{ A_{\Phi_S,\B,\BL}, A_{\Phi_{\Lambda\setminus S},\B,\BL}\} \leq \alpha,
		$$
		which completes the proof. 
\end{proof}
As a corollary we obtain a complete description of the condition number in the real case.
\begin{corollary}\label{cor:stab-real-case}
	Suppose that $\Phi$ constitutes a frame for $\mathcal{B}$
	and that $\K=\mathbb{R}$. 
	Then 
	$$
		\frac{B_{\Phi,\mathcal{B},\BL}}{2\sigma_{\Phi,\mathcal{B},\BL}}
		\le 
		\tau_{\Phi,\mathcal{B},\BL}
		\le 
		\frac{B_{\Phi,\mathcal{B},\BL}}{\sigma_{\Phi,\mathcal{B},\BL}}.
	$$
\end{corollary}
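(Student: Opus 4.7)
The plan is to read off the claim directly by combining the two preceding results on the Lipschitz constants. Recall that by definition $\taup = \betap/\alphap$, so bounding the condition number reduces to bounding the numerator and the denominator separately in terms of $\Bp$ and $\sigmap$.

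First, by Theorem \ref{stabup-real}, the upper Lipschitz constant is exactly $\betap = \Bp$, so I can substitute $\Bp$ for $\betap$ in the defining ratio. Next, Theorem \ref{stablow-real} gives the two-sided inequality
\[
\sigmap \le \alphap \le 2\sigmap.
\]
Inverting these inequalities yields $\frac{1}{2\sigmap} \le \frac{1}{\alphap} \le \frac{1}{\sigmap}$. Multiplying through by the positive constant $\Bp$ and using the substitution from the first step gives
\[
\frac{\Bp}{2\sigmap} \le \frac{\Bp}{\alphap} = \frac{\betap}{\alphap} = \taup \le \frac{\Bp}{\sigmap},
\]
which is the desired chain of inequalities.

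There is no real obstacle here: the statement is a bookkeeping corollary and the only thing to be careful about is that $\sigmap > 0$, so that the fractions are well defined; this is implicit in the hypothesis that $\Phi$ is a frame satisfying the SCP with some positive parameter (otherwise both the upper and lower bounds are vacuous as $+\infty$, consistent with Problem \ref{def-problem} being unstable). Therefore the proof is just the two substitutions above, carried out in one or two lines.
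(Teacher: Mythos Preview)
Your proof is correct and follows exactly the paper's own argument, which simply states that the corollary is a direct consequence of Theorems \ref{stabup-real} and \ref{stablow-real}. Your additional remark about the case $\sigmap=0$ is harmless bookkeeping not present in the paper.
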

\begin{proof}
	The statement is a direct consequence of Theorems  \ref{stabup-real} and
	\ref{stablow-real}.
\end{proof}
%

\subsubsection{The complex case}
We go on to study the complex case. Naturally the analysis becomes much more complicated in this case and it is in particular impossible to completely characterize the condition number in terms of the SCP. Nevertheless it is possible to provide a lower bound. We note that the following result (Theorem \ref{thmstab}) settles a conjecture posed by Bandeira et al. in \cite{Bandeira_SavingPhase}, where the authors state that they \emph{''suspect that the strong complement property is necessary for stability in the complex case, but have no proof of this''}.
\begin{theorem}\label{thmstab}
Let $\Phi = (\varphi_\lambda)_{\lambda \in \Lambda} \subset \B'$ constitute a frame for $\B$. Then, there exists a constant $C>0$ (depending on only the frame constants of $\Phi$) such that
$$\alpha_{\Phi,\B,\BL} \le C \sigma_{\Phi,\B,\BL}.$$
The constant $C$ can be taken to be
$$
	C = 2 \, \frac{B_{\Phi,\mathcal{B},\BL}}{A_{\Phi,\mathcal{B},\BL}}.
$$
\end{theorem}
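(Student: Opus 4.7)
The plan is to mirror the proof of Theorem~\ref{stablow-real} by selecting unit-norm vectors $u$ and $v$ concentrated on complementary sets and testing the lower Lipschitz constant on the pair $x := u+v$, $y := u-v$. The essential new difficulty in the complex case is that the quotient distance $d_\B(x,y)$ now extends over the full unit circle rather than $\{\pm 1\}$, so a nontrivial lower bound on $d_\B(x,y)$ must be derived via the frame property rather than being automatic as in the real case.

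Fix $\sigma > \sigmap$. By definition of $\sigmap$, there exist $S\subset\Lambda$ and vectors $u,v\in\B$ with $\|u\|_\B = \|v\|_\B = 1$ satisfying $\|[u,\Phi_S]\|_\BL \le \sigma$ and $\|[v,\Phi_{\Lambda\setminus S}]\|_\BL \le \sigma$. I set $x := u+v$ and $y := u-v$. For the upper estimate on the numerator I would use that for every $\lambda \in \Lambda$,
\[
	\bigl| |[x,\varphi_\lambda]| - |[y,\varphi_\lambda]| \bigr| \le \min\bigl(|[x-y,\varphi_\lambda]|,\, |[x+y,\varphi_\lambda]|\bigr) = 2\min\bigl(|[v,\varphi_\lambda]|,\, |[u,\varphi_\lambda]|\bigr),
\]
selecting the $|[u,\varphi_\lambda]|$-bound on $S$ and the $|[v,\varphi_\lambda]|$-bound on $\Lambda\setminus S$. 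The solidity of $\BL$ together with the triangle inequality then yields
\[
\|\mathcal{A}_\Phi(x) - \mathcal{A}_\Phi(y)\|_\BL \le 2\|[u,\Phi_S]\|_\BL + 2\|[v,\Phi_{\Lambda\setminus S}]\|_\BL \le 4\sigma.
\]

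The core of the argument is the lower bound on $d_\B(x,y) = \min_{|\tau|=1}\|(1-\tau)u + (1+\tau)v\|_\B$. Writing $\tau = e^{i\theta}$ and $c := \cos(\theta/2)$, $s := \sin(\theta/2)$, the vector inside the norm equals $2e^{i\theta/2}(cv - isu)$, so by the lower frame bound
\[
	d_\B(x,y) \ge \frac{2}{\Bp}\min_{c^2+s^2=1}\|c[v,\Phi] - is[u,\Phi]\|_\BL.
\]
I would then split the $\BL$-norm into its restrictions to $S$ and $\Lambda\setminus S$ via solidity, and combine the reverse triangle inequality with the auxiliary estimates $\|[v,\Phi_S]\|_\BL \ge \Ap - \sigma$ and $\|[u,\Phi_{\Lambda\setminus S}]\|_\BL \ge \Ap - \sigma$ (each following from $\|[\cdot,\Phi]\|_\BL \ge \Ap$ and $\|[\cdot,\Phi]\|_\BL \le \|[\cdot,\Phi_S]\|_\BL + \|[\cdot,\Phi_{\Lambda\setminus S}]\|_\BL$) to obtain the two partial lower bounds $|c|(\Ap-\sigma) - |s|\sigma$ and $|s|(\Ap-\sigma) - |c|\sigma$. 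Taking the maximum of these and minimizing over $c^2+s^2=1$ produces $d_\B(x,y) \ge c_0(\Ap - 2\sigma)/\Bp$ with an explicit absolute constant $c_0 > 0$. Feeding both bounds into $\alphap \le \|\mathcal{A}_\Phi(x) - \mathcal{A}_\Phi(y)\|_\BL / d_\B(x,y)$ and letting $\sigma \searrow \sigmap$ yields $\alphap \le C\sigmap$ with $C$ of order $\Bp/\Ap$; a careful tracking of the constants should produce exactly $C = 2\Bp/\Ap$.

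The main obstacle, completely absent in the real case, is controlling $d_\B(x,y)$ in the complex quotient: if $v = \pm i u$ were admissible, then $x = \mp i y$ and $d_\B(x,y)=0$, so the numerator--denominator ratio would carry no information. The SCP constant $\sigmap$ is precisely what quantitatively rules out this degeneracy, and the frame inequality is the mechanism that converts this rigidity into a genuine lower bound on $d_\B(x,y)$; hence the appearance of the frame-constant ratio $\Bp/\Ap$ in the final constant.
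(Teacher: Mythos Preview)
Your proposal follows essentially the same strategy as the paper: pick $u,v$ witnessing the near-failure of the SCP, set $x=u+v$, $y=u-v$, bound $\|\mathcal A_\Phi(x)-\mathcal A_\Phi(y)\|_\BL\le 4\sigma$, and then use the frame inequality together with a splitting into $S$ and $\Lambda\setminus S$ to bound $d_\B(x,y)$ from below. Your use of the \emph{maximum} of the two restricted norms (via $\|w\|_\BL\ge\|w\chi_S\|_\BL$) in place of the paper's \emph{average} $\|w\|_\BL\ge\tfrac12\|w\chi_S\|_\BL+\tfrac12\|w\chi_{\Lambda\setminus S}\|_\BL$ is a harmless variation and leads to the same bound $d_\B(x,y)\ge \sqrt{2}(\Ap-2\sigma)/\Bp$. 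One terminological slip: the inequality $\|w\|_\B\ge \|[w,\Phi]\|_\BL/\Bp$ comes from the \emph{upper} frame bound, not the lower one.

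There is, however, a genuine gap in your final step. Feeding the two estimates into $\alphap\le 4\sigma/d_\B(x,y)$ and letting $\sigma\searrow\sigmap$ gives
\[
\alphap\le \frac{4\,\Bp}{c_0(\Ap-2\sigmap)}\,\sigmap,
\]
and the prefactor is \emph{not} a constant depending only on the frame bounds: it blows up as $\sigmap\uparrow \Ap/2$, and the estimate is vacuous if $\sigmap\ge \Ap/2$. The paper closes this gap by a dichotomy: fix $t\in(0,1/2)$; if $\sigmap\le t\Ap$ use the analytic bound with $\Ap-2\sigma\ge(1-2t)\Ap$, while if $\sigmap\ge t\Ap$ use the trivial estimate $\alphap\le\betap=\Bp\le \sigmap\Bp/(t\Ap)$. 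Optimising over $t$ then yields a constant of the form $C\,\Bp/\Ap$ with $C$ absolute. Your sentence ``a careful tracking of the constants should produce exactly $C=2\Bp/\Ap$'' hides exactly this missing argument.
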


\begin{proof}
Let $\sigma > \sigma_{\Phi,\mathcal{B},\BL}$.
		Then, as in the real case, there exist $S\subset \Lambda$ and $u,v\in \mathcal{B}$ with $\|u\|_\B = 1 = \|v\|_\B$ such that
		\begin{equation}\label{instab1comp}
			\|[u,\Phi_S] \|_{\BL} \leq \sigma
		\end{equation}
		and
		\begin{equation}
			 \|[v,\Phi_{\Lambda \setminus S}] \|_{\BL} \leq \sigma.
		\end{equation}
		%
		We also note that, due to the fact that $\Phi$ is a frame, it holds that
		\begin{equation}
			\| [u,\Phi_{\Lambda \setminus S}]\|_{\BL} \geq A_{\Phi, \B, \BL}-\sigma,
		\end{equation}
		and
		\begin{equation}\label{instab2comp2}
			\| [v,\Phi_S]\|_{\BL} \geq A_{\Phi, \B, \BL}-\sigma.
		\end{equation}
		By putting $x:=u+v$ and $y:=u-v$, one again obtains
		\begin{align*}
			\|\mathcal{A}_\Phi(x) - \mathcal{A}_\Phi(y)\|_{\BL}
			 &\le  2 \|[u,\Phi_S]\|_{\BL}
			 + 2
			 \|[v,\Phi_{\Lambda \setminus S}]\|_{\BL}.
		\end{align*}
		%
		By our assumption on $u,v$ this implies that
		$$
			\|\mathcal{A}_\Phi(x) - \mathcal{A}_\Phi(y)\|_{\BL}\le 4\sigma
		$$
		Since $\sigma>\sigma_{\Phi,\mathcal{B},\BL}$, it follows that
		$$
			\alpha_{\Phi,\B,\BL} \le \frac{4 \sigma_{\Phi,\B,\BL}}{d_\B(x,y)}.
		$$
		It remains to show that $d_\B(x,y)$ is not small. We need to give a lower bound on $\| x - \tau y\|_\B$ that holds for any $\tau \in \C$ with $|\tau|=1$. By the frame property of $\Phi$ we have 
		\begin{align*}
	B_{\Phi,\B,\BL}\|x-\tau y\|_\B &\geq 
	\| [x-\tau y, \Phi]\|_\BL. 
	\end{align*}
	Applying (\ref{subset-ineq}) twice for $\Lambda'=S$ and $\Lambda'=\Lambda \setminus S$ yields
	\begin{align*}
		B_{\Phi,\B,\BL}\|x-\tau y\|_\B &\geq \| [x-\tau y,\Phi] \|_\BL \\
		&\geq \frac{1}{2}\| [x-\tau y,\Phi_S] \|_\BL +  \frac{1}{2} \| [x-\tau y,\Phi_{\Lambda \setminus S}] \|_\BL \\
		&\geq \frac{1}{2} \| [(1-\tau) u + (1+\tau) v, \Phi_S] \|_\BL \\
	&+\frac{1}{2}
	\| [(1-\tau) u + (1+\tau) v, \Phi_{\Lambda \setminus S}] \|_\BL.
	\end{align*}
	Employing the reverse triangle inequality and Eqns (\ref{instab1comp})--(\ref{instab2comp2}) we further obtain
	\begin{align*}
	B_{\Phi,\B,\BL}\|x-\tau y\|_\B &\geq
	\frac{|1+\tau|}{2} \|[v,\Phi_S]\|_\BL
	-
	\frac{|1-\tau|}{2}\| [u,\Phi_S]\|_\BL
	\\
	&+
	\frac{|1-\tau|}{2}
	\| [u,\Phi_{\Lambda \setminus S}]\|_\BL
	-
	\frac{|1+\tau|}{2}\| [v,\Phi_{\Lambda \setminus S}]\|_\BL\\
	&\geq 
	\frac{|1+\tau|+|1-\tau|}{2} \cdot \left(A_{\Phi,\mathcal{B},\BL} - 
	2 \sigma\right)
	\\
	&\geq \sqrt{2}(\Ap -  2 \sigma),
	\end{align*}
	provided that $\Ap-2\sigma\geq0$. Thus, we have obtained a lower bound on $\| x-\tau y\|_\B$ which proves that $d_\mathcal{B}(x,y)$ is not small if $\sigma$ is small.
	In particular, we have
	$$
		\alpha_{\Phi,\mathcal{B},\BL}
		\le 
		\sigma_{\Phi,\mathcal{B},\BL} \frac{4 \cdot B_{\Phi,\mathcal{B},\BL}}{\sqrt{2}(A_{\Phi,\mathcal{B},\BL}-2\sigma_{\Phi,\mathcal{B},\BL}) },
	$$
	whenever $\sigma_{\Phi,\mathcal{B},\BL}\le  t A_{\Phi,\mathcal{B},\BL}$ with $t<1/2$. 
	On the other hand, using Theorem \ref{stabup-real}, we have the trivial estimate
	$$
		\alpha_{\Phi,\mathcal{B},\BL}\le 
		\beta_{\Phi,\mathcal{B},\BL}
		= B_{\Phi,\mathcal{B},\BL}
		\le \sigma_{\Phi,\mathcal{B},\BL}
		\frac{B_{\Phi,\mathcal{B},\BL}}{t A_{\Phi,\mathcal{B},\BL}}
	$$
	whenever $\sigma_{\Phi,\mathcal{B},\BL}\geq  t A_{\Phi,\mathcal{B},\BL}$. Combining these estimates, one can choose 
	$$
		C := \min_{t<1/2} \Big( \frac{4 }{\sqrt{2}(1-2t)},\frac{1}{t}\Big) \frac{\Bp}{\Ap} = 2\,  \frac{\Bp}{\Ap}.
	$$
\end{proof}
The preceding result implies the following immediate corollary.
\begin{corollary}\label{cor:stab-complex-case}
Suppose that $\Phi$ constitutes a frame for $\mathcal{B}$
	and that $\K=\mathbb{C}$. 
	Then 
	%
	$$
		\frac{A_{\Phi,\mathcal{B},\BL}}{2\sigma_{\Phi,\mathcal{B},\BL}}
		\le 
		\tau_{\Phi,\mathcal{B},\BL}.
	$$
	%
\end{corollary}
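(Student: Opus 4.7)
The plan is to assemble the bound directly from the two preceding quantitative results, namely Theorem \ref{stabup-real} (upper Lipschitz constant equals upper frame bound) and Theorem \ref{thmstab} (the lower Lipschitz constant is controlled by $\sigma_{\Phi,\mathcal{B},\BL}$ with an explicit constant). Since $\tau_{\Phi,\mathcal{B},\BL}$ is by definition the ratio $\beta_{\Phi,\mathcal{B},\BL}/\alpha_{\Phi,\mathcal{B},\BL}$, any lower bound on $\tau_{\Phi,\mathcal{B},\BL}$ is obtained by combining an upper bound on $\alpha_{\Phi,\mathcal{B},\BL}$ with a known value (or lower bound) of $\beta_{\Phi,\mathcal{B},\BL}$.

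First I would note that the proof of Theorem \ref{stabup-real} is field-agnostic: it uses only the reverse triangle inequality $||z|-|w||\le |z-\tau w|$ (valid for any unimodular $\tau\in\K$) together with the solidity of $\BL$ and the upper frame bound. Hence the identity $\beta_{\Phi,\mathcal{B},\BL}=B_{\Phi,\mathcal{B},\BL}$ carries over verbatim to the complex setting $\K=\C$. Second, I would invoke Theorem \ref{thmstab}, which yields
\[
\alpha_{\Phi,\mathcal{B},\BL}\;\le\;\frac{2\,B_{\Phi,\mathcal{B},\BL}}{A_{\Phi,\mathcal{B},\BL}}\;\sigma_{\Phi,\mathcal{B},\BL}.
\]

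Dividing the first identity by this inequality gives
\[
\tau_{\Phi,\mathcal{B},\BL}\;=\;\frac{\beta_{\Phi,\mathcal{B},\BL}}{\alpha_{\Phi,\mathcal{B},\BL}}
\;\ge\;\frac{B_{\Phi,\mathcal{B},\BL}}{\tfrac{2\,B_{\Phi,\mathcal{B},\BL}}{A_{\Phi,\mathcal{B},\BL}}\,\sigma_{\Phi,\mathcal{B},\BL}}
\;=\;\frac{A_{\Phi,\mathcal{B},\BL}}{2\,\sigma_{\Phi,\mathcal{B},\BL}},
\]
which is the claim.

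There is essentially no obstacle here — the corollary is a purely algebraic combination of Theorems \ref{stabup-real} and \ref{thmstab}. The only mildly delicate point worth flagging explicitly in the write-up is that Theorem \ref{stabup-real} was stated in the subsection on the real case but its proof is equally valid over $\C$; all references to the field enter only through the unimodular parameter $\tau$, which is treated uniformly in the definition of $d_{\mathcal{B}}(\cdot,\cdot)$. Consequently the bound $\beta_{\Phi,\mathcal{B},\BL}=B_{\Phi,\mathcal{B},\BL}$ is available in the complex setting without any modification, and the corollary follows in one line.
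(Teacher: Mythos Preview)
Your proof is correct and matches the paper's approach: the corollary is stated there as an immediate consequence of Theorem~\ref{thmstab}, and the one-line computation you give (combining $\beta_{\Phi,\mathcal{B},\BL}=B_{\Phi,\mathcal{B},\BL}$ from Theorem~\ref{stabup-real} with $\alpha_{\Phi,\mathcal{B},\BL}\le \tfrac{2B_{\Phi,\mathcal{B},\BL}}{A_{\Phi,\mathcal{B},\BL}}\sigma_{\Phi,\mathcal{B},\BL}$) is exactly what is intended. One minor remark: Theorem~\ref{stabup-real} is in fact stated \emph{before} the real/complex case split and already applies to general $\K$, so your caveat about extending it to $\C$ is unnecessary.
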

We note that in the real case, the lower bound in Corollary \ref{cor:stab-real-case} is stronger than the one in Corollary \ref{cor:stab-complex-case} for the general complex case. In Corollary \ref{cor:stab-real-case} we exploit the fact that the quantities $u, v$ determine the distance between $x$ and $y$ which is no longer true in the complex case.
%
%
\subsubsection{Uniform stability always holds in finite dimensions}\label{sec:finite}
Suppose that $\mathcal{B}$ is finite dimensional. We show that
in this case, the condition number $\tau_{\Phi,\mathcal{B},\BL}$
is always finite. In the setting of discrete frames for Hilbert spaces this 
result has been shown in \cite{cahill}.
\begin{theorem}
	Suppose that $\mathcal{B}$ is finite dimensional ($\K$ can be either $\mathbb{R}$ or $\mathbb{C}$). Assume further that $\Phi$ constitutes a frame for $\mathcal{B}$ with associated Banach space $\BL$, and that $\mathcal{A}_\Phi$ is injective.  
	Then, 
	$$
		\tau_{\Phi,\mathcal{B},\BL}<\infty.
	$$
\end{theorem}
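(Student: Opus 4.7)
The plan is to argue that $\alpha_{\Phi,\B,\BL}>0$, since $\beta_{\Phi,\B,\BL}=B_{\Phi,\B,\BL}<\infty$ is already given by Theorem~\ref{stabup-real} and together these imply $\tau_{\Phi,\B,\BL}<\infty$. Proceeding by contradiction, I would assume $\alpha_{\Phi,\B,\BL}=0$ and pick sequences $x_n,y_n\in\B$ with $d_\B(x_n,y_n)>0$ and
$\|\mathcal{A}_\Phi(x_n)-\mathcal{A}_\Phi(y_n)\|_\BL/d_\B(x_n,y_n)\to 0$. Exploiting the joint $1$-homogeneity of $\mathcal{A}_\Phi$ and $d_\B$ under positive rescaling, I normalize so that $\|x_n\|_\B^2+\|y_n\|_\B^2=2$, and then replace $y_n$ by its optimally phase-rotated representative (which preserves both $\mathcal{A}_\Phi(y_n)$ and $\|y_n\|_\B$) so that $d_\B(x_n,y_n)=\|x_n-y_n\|_\B$. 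Since $\B$ is finite-dimensional, Bolzano--Weierstrass yields a subsequence with $x_n\to x_*$ and $y_n\to y_*$; passing to the limit in $\|x_n-\tau y_n\|_\B\ge\|x_n-y_n\|_\B$ for $|\tau|=1$ shows that optimality of the representatives survives, whence $d_\B(x_*,y_*)=\|x_*-y_*\|_\B$.

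The argument now splits into two cases. If $x_*\neq y_*$ (\emph{non-collapse}), the denominator stays bounded below, so the numerator must tend to $0$; continuity of $\mathcal{A}_\Phi$ (from the upper frame bound) gives $\mathcal{A}_\Phi(x_*)=\mathcal{A}_\Phi(y_*)$, and injectivity forces $d_\B(x_*,y_*)=0$, a contradiction. The delicate case is $y_*=x_*$ (\emph{collapse}, with $\|x_*\|_\B=1$ by the normalization): I write $y_n=x_n+t_nw_n$ with $t_n:=\|y_n-x_n\|_\B\to 0$ and $\|w_n\|_\B=1$, extract $w_n\to w_*\in\B$ with $\|w_*\|_\B=1$, and rephrase the hypothesis as
$$
\frac{\|\mathcal{A}_\Phi(x_n+t_nw_n)-\mathcal{A}_\Phi(x_n)\|_\BL}{t_n}\longrightarrow 0,
$$
i.e.\ the directional difference quotient of $\mathcal{A}_\Phi$ at $x_*$ along $w_*$ vanishes in $\BL$.

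The hard part is converting this vanishing into a contradiction with $\|w_*\|_\B=1$. A pointwise Taylor expansion on the open set $\Lambda_0:=\{\lambda:[x_*,\varphi_\lambda]\neq 0\}$ gives
$$
\frac{|[x_n+t_nw_n,\varphi_\lambda]|-|[x_n,\varphi_\lambda]|}{t_n}\longrightarrow\mathrm{Re}\bigl(\overline{\mathrm{sign}([x_*,\varphi_\lambda])}[w_*,\varphi_\lambda]\bigr),
$$
and restricting to compact $K\subset\Lambda_0$ (on which $\lambda\mapsto\varphi_\lambda$ has uniformly bounded norm in $\B'$), combined with the solidity of $\BL$ and the Lipschitz estimate $\|[w_n-w_*,\Phi]\|_\BL\le B_{\Phi,\B,\BL}\|w_n-w_*\|_\B$, upgrades this to $\BL$-convergence; so the limiting quotient vanishes on $\Lambda_0$. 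In the real case the limit simplifies to $\mathrm{sign}([x_*,\varphi_\lambda])[w_*,\varphi_\lambda]$, solidity forces $[w_*,\varphi_\lambda]=0$ on $\Lambda_0$, a complementary estimate on $\Lambda_0^c$ (where $[x_n,\varphi_\lambda]\to 0$ and $|[y_n,\varphi_\lambda]|\ge t_n|[w_n,\varphi_\lambda]|-|[x_n,\varphi_\lambda]|$) extends this to all of $\Lambda$, and then the lower frame bound $\|[w_*,\Phi]\|_\BL\ge A_{\Phi,\B,\BL}\|w_*\|_\B=A_{\Phi,\B,\BL}>0$ gives the contradiction. The main obstacle lies in the complex case, where the first-order derivative degenerates along the phase direction $ix_*$: here one must additionally exploit the fact that the optimal-representative choice of $y_n$ forces $w_*$ to be transverse to the phase circle at $x_*$ (the Hilbert-space manifestation is $\mathrm{Im}\langle x_*,w_*\rangle=0$, obtained from first-order optimality in $\theta$ of $\|x_n-e^{i\theta}y_n\|_\B$ at $\theta=0$, with an analogous Banach-space duality condition in general), which excludes the degenerate direction and reinstates a derivative lower bound proportional to $A_{\Phi,\B,\BL}$, thereby closing the argument.
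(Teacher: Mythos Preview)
The paper takes a much shorter route: it invokes Theorem~\ref{thm:weakstab} (weak stability), which already establishes that $\mathcal{A}_\Phi^{-1}$ is continuous on its range; finite-dimensionality makes $\mathcal{A}_\Phi(\B)\cap B_1$ compact, so $\mathcal{A}_\Phi^{-1}$ is uniformly continuous there, and the paper concludes by scale invariance. No case split into ``collapse'' versus ``non-collapse'' and no differential analysis is needed---in particular the real and complex cases are handled simultaneously with no extra work.

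Your direct approach is genuinely different, but the collapse case has real gaps. In the real case, the ``complementary estimate on $\Lambda_0^c$'' does not go through as written: from $|[y_n,\varphi_\lambda]|\ge t_n|[w_n,\varphi_\lambda]|-|[x_n,\varphi_\lambda]|$ you get only $|[w_n,\varphi_\lambda]|\le g_n(\lambda)+2|[x_n,\varphi_\lambda]|/t_n$, and you know $[x_n,\varphi_\lambda]\to 0$ on $\Lambda_0^c$ but \emph{not} that $|[x_n,\varphi_\lambda]|/t_n\to 0$. The scalar example $a_n=t_n$, $b=-2$ gives $(|a_n+t_nb|-|a_n|)/t_n\equiv 0$ while $b\neq 0$, so vanishing of the difference quotient does not force $[w_*,\varphi_\lambda]=0$ on $\Lambda_0^c$. (This particular step can be rescued via the CP: $[w_*,\Phi_{\Lambda_0}]=0$ together with $[x_*,\Phi_{\Lambda_0^c}]=0$ and $x_*\neq 0$ force $w_*=0$ by Lemma~\ref{CPlem}.) In the complex case your sketch is not a proof: in a general Banach space there is no inner-product first-order condition ``$\mathrm{Im}\langle x_*,w_*\rangle=0$'', and even granting a duality substitute, the assertion that transversality ``reinstates a derivative lower bound proportional to $A_{\Phi,\B,\BL}$'' would require showing that the real-linear differential of $\mathcal{A}_\Phi$ at $x_*$ has kernel exactly $\mathrm{span}_\R\{ix_*\}$---a local statement that does not follow from the global injectivity of $\mathcal{A}_\Phi$ alone. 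The paper's route via weak stability sidesteps all of this entirely.
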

\begin{proof}
	We need to study uniform continuity of 
	the map $\mathcal{A}_\Phi^{-1}$. Since this mapping is scaling-invariant, we may without loss of generality only consider
	the mapping $\mathcal{A}_\Phi^{-1}$, restricted to 
	$\mathcal{A}_\Phi(\mathcal{B})\cap B_1$, where $B_1$ denotes
	the unit ball in $\BL$. The set $\mathcal{A}_\Phi(\mathcal{B})\cap B_1$ is compact since $\mathcal{B}$ is finite-dimensional 
	and by Theorem \ref{thm:weakstab} the mapping 
	$\mathcal{A}_\Phi^{-1}$ is continuous on this set.
	Therefore, it is uniformly continuous and this proves the claim. 
\end{proof}
%

%
%
%
%

%
\subsubsection{The SCP can never hold in infinite dimensions}\label{sec:infinite}
We have seen that the SCP is a necessary condition for the stability of the phase retrieval problem \ref{def-problem}, both in the real and the complex case. In the Hilbert space setting and for discrete frames, it has been shown in \cite{cahill}, that phase retrieval can never be done stably if the space is infinite-dimensional. One might ask whether increasing the redundancy of a frame might help. We show that there is unfortunately no hope. More precisely, the following results state that in our more general Banach space setting and even for continuous frames, the SCP can never hold. To prove this, we first need a result, that is interesting in its own right:

\begin{theorem}\label{noframe}
Suppose $\B$ is an infinite-dimensional Banach space. Then, for any compact set $\Lambda$ and any system $\Phi = (\varphi_\lambda)_{\lsub} \subset \B'$, $\Phi$ cannot constitute a frame for $\B$. In particular, for $\BL$ an admissible Banach space associated with $\B$, there is no positive lower bound $A_{\Phi,\B,\BL}>0$ such that
\begin{equation}\label{compact-indexset}
A_{\Phi,\B,\BL} \|x\|_\B \leq \|[x,\Phi]\|_\BL
\end{equation}
for all $x \in \B$.
\end{theorem}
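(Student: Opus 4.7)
The plan is to argue by contradiction: assume that $\Phi$ constitutes a frame for $\B$ with associated admissible Banach space $\BL$ and lower frame bound $A := A_{\Phi,\B,\BL} > 0$, and then derive a contradiction with the assumption that $\B$ is infinite-dimensional. The key observation is that compactness of $\Lambda$ combined with the (norm-)continuity of $\lambda \mapsto \varphi_\lambda$ forces the measurement functionals to lie in a compact, hence totally bounded, subset of $\B'$. In infinite dimensions this compactness is incompatible with separating norms uniformly from below.

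First I would note that $K := \{\varphi_\lambda : \lambda \in \Lambda\} \subset \B'$ is the continuous image of a compact set, hence compact in the norm topology of $\B'$; in particular it is totally bounded. Fix $\varepsilon > 0$ and choose a finite $\varepsilon$-net $\varphi_{\lambda_1}, \dots, \varphi_{\lambda_N}$ for $K$, so that for every $\lambda \in \Lambda$ there exists $i(\lambda) \in \{1,\dots,N\}$ with $\|\varphi_\lambda - \varphi_{\lambda_{i(\lambda)}}\|_{\B'} < \varepsilon$.

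Next, because $\B$ is infinite-dimensional, the closed subspace $V := \bigcap_{i=1}^N \ker \varphi_{\lambda_i} \subset \B$ has codimension at most $N$ and is therefore itself infinite-dimensional. Pick any $x \in V$ with $\|x\|_\B = 1$. For every $\lambda \in \Lambda$, using $[x,\varphi_{\lambda_{i(\lambda)}}] = 0$, we obtain
\[
|[x,\varphi_\lambda]| = |[x, \varphi_\lambda - \varphi_{\lambda_{i(\lambda)}}]| \le \|\varphi_\lambda - \varphi_{\lambda_{i(\lambda)}}\|_{\B'} \cdot \|x\|_\B < \varepsilon,
\]
so that $|[x,\Phi]| \le \varepsilon \chi_\Lambda$ pointwise on $\Lambda$.

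Finally, by admissibility condition (i) we have $\|\chi_\Lambda\|_\BL < \infty$ since $\Lambda$ is compact, and by the solidity condition (ii) it follows that
\[
\|[x,\Phi]\|_\BL \le \varepsilon \|\chi_\Lambda\|_\BL.
\]
The assumed lower frame inequality then yields $A = A \|x\|_\B \le \|[x,\Phi]\|_\BL \le \varepsilon \|\chi_\Lambda\|_\BL$. Since $\varepsilon > 0$ was arbitrary and $\|\chi_\Lambda\|_\BL$ is a fixed finite constant, letting $\varepsilon \to 0$ forces $A = 0$, contradicting the frame property. The argument is quite direct once the compactness of $K$ in $\B'$ is recognized; the only step requiring a little care is the passage from a finite $\varepsilon$-net to a norm-one vector in $V$, which rests crucially on infinite-dimensionality of $\B$ (so that the finite-codimensional subspace $V$ is nontrivial). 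This is the heart of the argument and the main conceptual obstacle.
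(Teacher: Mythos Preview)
Your proof is correct and follows essentially the same strategy as the paper's: approximate the family $\{\varphi_\lambda\}$ by finitely many functionals using compactness, pick a unit vector annihilating those finitely many functionals (using infinite-dimensionality), and then use solidity together with $\|\chi_\Lambda\|_{\BL}<\infty$ to contradict the lower frame bound. Your version is in fact slightly cleaner, since working directly with an $\varepsilon$-net of the compact set $K=\{\varphi_\lambda:\lambda\in\Lambda\}\subset\B'$ avoids the paper's open-cover-of-$\Lambda$ argument and its informal ``small overlaps'' reduction.
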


\begin{proof}
We argue by contradiction. Suppose that for a compact set $\Lambda$ and $\Phi = (\varphi_\lambda)_{\lsub}\subset \B'$ a frame for $\B$, there exists $A_{\Phi,\B,\BL}>0$ that satisfies (\ref{compact-indexset}). Let $\nu>0$ be arbitrary. 
Since the mapping $\lambda\mapsto \varphi_\lambda$ is continuous there exists, for every $\lambda\in \Lambda$ an open set $U_\lambda$ such that 
\begin{equation}\label{C-lambda}
	\|\varphi_\omega -\varphi_\lambda\|_{\B'} < \nu\quad \mbox{for all }
	\omega \in U_\lambda. 
\end{equation}
The $U_\lambda$'s form an open cover of the compact set $\Lambda$. Therefore, already finitely many $U_\lambda$'s cover $\Lambda$. We denote them by $U_{\lambda_1},\dots , U_{\lambda_N}$. Without loss of generality we can assume that these sets have "small overlaps" in the sense that 
$$
	\| \sum_{i=1}^N \chi_{U_{\lambda_i}} \|_{\BL} \leq k \| \chi_\Lambda\|_\BL 
$$

for some constant $k$ independent of $N$. Let $c_\Lambda:=k \| \chi_\Lambda\|_\BL$.

By the frame condition of $\B$ and the solidity of $\BL$ we have
\begin{align}
	A_{\Phi,\mathcal{B},\BL}\|x\|_\B&\le\|[x,\Phi] \|_\BL =\||[x,\Phi]| \|_\BL \nonumber \\
	& \le\| \sum_{i=1}^N |[x,\Phi_{U_{\lambda_i}}]| \|_\BL  \nonumber \\
	& \le \| \sum_{i=1}^N (|[x,\varphi_{\lambda_i}]| \chi_{U_{\lambda_i}}(\lambda))_\lsub \|_\BL + \| \sum_{i=1}^N (|[x,\varphi_\lambda-\varphi_{\lambda_i}]| \chi_{U_{\lambda_i}}(\lambda))_\lsub \|_\BL \nonumber \\
	& \le  \sum_{i=1}^N |[x,\varphi_{\lambda_i}]| \| \chi_{U_{\lambda_i}} \|_\BL +  \| \sum_{i=1}^N (|[x,\varphi_\lambda-\varphi_{\lambda_i}]| \chi_{U_{\lambda_i}}(\lambda))_\lsub \|_\BL \label{rhs}
\end{align}
%
%
%
We note that for $\lambda \in U_{\lambda_i}$, by Equation (\ref{C-lambda})
$$
	|[x,\varphi_\lambda-\varphi_{\lambda_i}]| \le \nu \|x\|_\B.
$$
Together with the solidity of $\BL$ and our assumption on the $U_{\lambda_i}$'s, we can rewrite the second term of the RHS in (\ref{rhs}) to
$$
	 \| \sum_{i=1}^N (|[x,\varphi_\lambda-\varphi_{\lambda_i}]| \chi_{U_{\lambda_i}}(\lambda))_\lsub \|_\BL \leq c_\Lambda \nu \|x\|_\B.
$$
%
In summary, writing
$$
	\omega_i:= \| \chi_{U_{\lambda_i}}\|_\BL,
$$
%
we obtain
$$
A_{\Phi,\mathcal{B},\BL}\|x\|_\B \le \sum_{i=1}^N \omega_i |[x,\varphi_{\lambda_i}]|+ c_\Lambda \nu  \|x\|_\B,
$$
or
\begin{equation}\label{finite-frame}
	(A_{\Phi,\mathcal{B},\BL}-c_\Lambda \nu )\|x\|_\B \le \sum_{i=1}^N \omega_i |[x,\varphi_{\lambda_i}]|.
\end{equation}

Note that since $c_\Lambda$ is independent of $\nu$, we can take $\nu$ sufficiently small, such that $A_{\Phi,\mathcal{B},\BL}-c_\Lambda \nu>0$. Let $\Phi_N=(\varphi_{\lambda_i})_{i=1,\dots,N}$. Due to the finite dimensionality of $\Phi_N$, one can pick $x \in (\mbox{span } \Phi_N)_\perp$ with $\|x\|_\B \neq 0$. This is a contradiction to (\ref{finite-frame}). 
\end{proof}

We are now in the position to state our main result.

\begin{theorem}\label{nostab}
	Suppose that $\mathcal{B}$ is infinite-dimensional and let $\BL$
	be an admissible Banach space associated with $\B$. Then for every frame $\Phi\subset \mathcal{B}'$ of $\B$ with upper frame bound $B_{\Phi,\mathcal{B},\BL}<\infty$ it holds that $\sigma_{\Phi,\mathcal{B},\BL}=0$. Consequently, by Theorem \ref{thmstab}, %
	$$\inf_{\Phi\subset \mathcal{B}', B_{\Phi,\mathcal{B},\BL}<\infty}
	\tau_{\Phi,\mathcal{B},\BL} = \infty.$$
\end{theorem}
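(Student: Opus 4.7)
The plan is to prove the central claim $\sigma_{\Phi,\B,\BL}=0$; the second assertion about the infimum of $\tau_{\Phi,\B,\BL}$ then follows immediately from Theorem \ref{thmstab}, since $\alpha_{\Phi,\B,\BL}\le C\,\sigma_{\Phi,\B,\BL}=0$ forces $\tau_{\Phi,\B,\BL}=\beta_{\Phi,\B,\BL}/\alpha_{\Phi,\B,\BL}=\infty$ for every frame $\Phi$ satisfying the hypotheses. Unpacking the definition of the SCP, it suffices to produce, for each $\sigma>0$, a subset $S\subset\Lambda$ together with unit vectors $x_u,x_v\in\B$ such that $\|[x_u,\Phi_S]\|_\BL<\sigma$ and $\|[x_v,\Phi_{\Lambda\setminus S}]\|_\BL<\sigma$; this is exactly what witnesses $A_{\Phi_S,\B,\BL}<\sigma$ and $A_{\Phi_{\Lambda\setminus S},\B,\BL}<\sigma$ simultaneously.

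My approach combines the two pieces of machinery already in place: property (iii) of admissible Banach spaces (density of compactly supported elements in $\BL$) and Theorem \ref{noframe} (no frame can be indexed by a compact set when $\B$ is infinite-dimensional). First I would fix any unit vector $x_v\in\B$, set $w:=[x_v,\Phi]\in\BL$, and invoke density to select $w'\in\BL$ with compact support $S\subset\Lambda$ satisfying $\|w-w'\|_\BL<\sigma$. Since $w'$ vanishes off $S$, solidity of $\BL$ yields
$$\|[x_v,\Phi_{\Lambda\setminus S}]\|_\BL=\|w\,\chi_{\Lambda\setminus S}\|_\BL=\|(w-w')\chi_{\Lambda\setminus S}\|_\BL\le\|w-w'\|_\BL<\sigma,$$
handling the complement side of the SCP. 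Second, because $S$ is compact and $\B$ is infinite-dimensional, Theorem \ref{noframe} forbids the existence of any positive $A$ with $A\|x\|_\B\le\|[x,\Phi_S]\|_\BL$ for all $x\in\B$; taking $A=\sigma$ supplies a unit vector $x_u\in\B$ with $\|[x_u,\Phi_S]\|_\BL<\sigma$. Since $\sigma>0$ was arbitrary, this forces $\sigma_{\Phi,\B,\BL}=0$.

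I do not anticipate a substantial obstacle here: the technical heavy lifting has already been absorbed by Theorem \ref{noframe}, whose proof (a finite open cover of the compact index set, combined with the fact that finitely many bounded linear functionals on an infinite-dimensional Banach space have nontrivial common kernel) does the real work. The one conceptual point worth recording explicitly is that $A_{\Phi_S,\B,\BL}$ must be interpreted as the supremal $A\ge 0$ satisfying the lower frame-type inequality on $\Phi_S$ (with the convention that this is $0$ when no positive such $A$ exists), so that the negative conclusion of Theorem \ref{noframe} directly produces witnesses below any prescribed $\sigma$. With this reading, the density/solidity step on $\Lambda\setminus S$ and the Theorem \ref{noframe} step on $S$ itself dovetail without friction, and the passage to $\tau_{\Phi,\B,\BL}=\infty$ via Theorem \ref{thmstab} is a one-line consequence.
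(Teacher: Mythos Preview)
Your proof is correct and follows essentially the same strategy as the paper: use density of compactly supported elements in $\BL$ (property (iii) of admissibility) together with solidity to make the coefficients of one fixed unit vector small outside a compact set, then invoke Theorem \ref{noframe} on that compact set to produce a second unit vector whose coefficients are small there. The only cosmetic differences are that the paper routes the density step through an explicit $\sigma$-compact exhaustion $(\Lambda_n)_{n\in\N}$ of $\Lambda$ before selecting the compact set, and swaps the roles of $S$ and $\Lambda\setminus S$ (and correspondingly of $u,v$) relative to your labeling.
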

\begin{proof}Pick $\varepsilon >0$ arbitrary.
	We will show that there exist a subset $S\subset \Lambda$ and elements $u,v\in \mathcal{B}$ which satisfy $\|u\|_\B=1=\|v\|_\B$, together with 
	\begin{equation}\label{instab1}
		\|[u,\Phi_S]\|_\BL \le \varepsilon,
	\end{equation}
	and
	\begin{equation}\label{instab2}
		\|[v,\Phi_{\Lambda \setminus S}]\|_\BL \le \varepsilon.
	\end{equation}
	In fact, we can pick $u$ to be any arbitrary but fixed function with $\|u\|_\B=1$. 
	The frame property of $\Phi$ implies
	$$
		\|[u,\Phi]\|_\BL \le B_{\Phi,\mathcal{B},\BL}.
	$$
	Since, by Assumption \ref{assump}, $\Lambda$ is $\sigma$-compact, there exists a sequence of compact sets $(\Lambda_n)_{n \in \N}$ such that $\Lambda_n \subset \Lambda_{n+1}$ for all $n \in \N$ and $\bigcup_{n \in \N} \Lambda_n = \Lambda.$ On the other hand, we have assumed that the set of compactly supported functions is dense in $\BL$, so that
	$$
		\| [u,\Phi]-[u,\Phi] \chi_{\Lambda_n}\|_\BL \to 0 \quad \text{ as } n \to \infty.
	$$
	Thus, there exists some $N \in \N$ for which
	$$
		\|[u,\Phi] \chi_{\Lambda \setminus \Lambda_N}\|_\BL \le \varepsilon,
	$$
	and hence (\ref{instab1}) holds with $S=\Lambda \setminus \Lambda_N$.
	
	Since $\Lambda_N$ is compact, by Theorem \ref{noframe}, we can pick $v\in \B$
	which satisfies $\|v\|_\B=1$, and (\ref{instab2}) for $S=\Lambda_N$.
\end{proof}

\begin{remark}
The above theorem shows the inherent instability of phase retrieval regardless of the redundancy of the discrete frame from which the phaseless measurements are drawn. For the reconstruction of a real-valued band-limited function $f$ from unsigned measurements as described in Examples \ref{ex:Cahill} and \ref{ex:Beurling-CP}, our theorem implies that even knowing $|f(x)|$ for all $x \in \mathbb{R}$ would not suffice for stable recovery of $f$. Hence, starting from a discrete set of measurements there is no hope to improve on the stability of the reconstruction problem through oversampling.

We further note that a partial remedy to the instability of phase retrieval in the infinite-dimensional setting is to relax the notion of phase retrieval as introduced in \cite{stable-phase}. There, phase retrieval is studied for the case of magnitude measurements $|V_\varphi f|$ from the Gabor or Cauchy wavelet transform. Instead of reconstructing $\tau f$ for some global phase factor $\tau$ from measurements $|V_\varphi f|$, one seeks to recover $\sum_j \tau_j f_j$, where $f=\sum_j f_j$ with $f_j$ such that $|V_\varphi f_j|$ is concentrated in some region $D_j \subset \mathbb{C}$. This relaxation of allowing different phase factors $\tau_j$ on disjoint regions $D_j$ leads to stability results even for infinite-dimensional phase retrieval.
\end{remark}
\begin{remark}
	Note that our result does \emph{not} imply that for a fixed and finite dimension $d\in \mathbb{N}$ there does
	not exist a representation system $\Phi = (\varphi_i)_{i=1}^N \subset \mathbb{C}^d$ such that the condition number 
	$\tau_{\Phi,\mathbb{C}^d,\mathbb{C}^M}$ is independent of $d$! Indeed, the results of \cite{candes2015phasewirt,candesnear,Bandeira_SavingPhase} suggest that for every dimension $d$ there exists
	a (random) representation system $\Phi_d\subset \mathbb{C}^d$ which allows for stable phase retrieval.
	However, our results show that for any finite dimensional representation system $\Phi = (\varphi_i)_{i=1}^N \subset \mathbb{C}^d$ which arises from projecting an infinite-dimensional problem to finite dimensions, the
	condition number of phase retrieval will blow up as the accuracy of the finite-dimensional approximation increases.  
\end{remark}

\textbf{Acknowledgments.} RA has been supported by an ETH Zurich Postdoctoral Fellowship (cofunded by Marie Curie Actions). We thank Ingrid Daubechies for helpful discussions. PG is grateful to Antti Knowles for inspiring discussions.
\bibliographystyle{abbrv}
\bibliography{phase-retrieval.bib}

\appendix

\section{Relative Compactness in Admissible Banach Spaces}
This appendix provides proofs for results on relative compactness in admissible Banach spaces which are needed in the proof of Theorem \ref{thm:weakstab}. 
Our main result is that, under an additional assumption, 
a subset $\mathfrak{K}\subset \BL$ is relatively compact (meaning that its closure is compact) if and only if
the set of its absolute values $|\mathfrak{K}|:=\{|w|:w\in \mathfrak{K}\}$ is relatively compact.
The additional assumption is as follows.
\begin{definition}[Strong equicontinuity]\label{def:equicont}
	Let $\BL$ be an admissible Banach space and let $\mathfrak{K}\subset \BL$.
	We call $\mathfrak{K}$ \emph{strongly equicontinuous} if for all
	$\varepsilon>0$ and all $\lambda\in \Lambda$ there
	exist open sets $U_\varepsilon(\lambda)\subset \Lambda$ such that
	$$
		\sup_{w\in \mathfrak{K}}|w(\mu) - w(\lambda)|\le \varepsilon
		\quad \mbox{for all } \mu \in U_\varepsilon(\lambda).
	$$
\end{definition}
Before we proceed we record a few facts on relative compactness in complete metric spaces. 
A subset $\mathfrak{K}$ of a complete metric space $X$ is called \emph{totally bounded} if
for all $\delta>0$ it admits a finite covering by balls of radius $\delta$, i.e., there exist $n_\delta \in \N$ and $w_1,\dots , 
w_{n_\delta}\in X$ such that
$$
	\mathfrak{K} \subset \bigcup_{i=1}^{n_{\delta}} B^X_\delta(w_k). 
$$
It is a fundamental fact that relative compactness and total boundedness coincide, see for example \cite{yosida}:
\begin{theorem}\label{thm:totalbound}
	Suppose	 that $X$ is a complete metric space and $\mathfrak{K}\subset X$.
	Then $\mathfrak{K}$ is relatively compact if and only if it is totally bounded.
\end{theorem}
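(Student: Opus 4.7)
The plan is to prove both directions using standard metric-space arguments, exploiting the fact that relative compactness of $\mathfrak{K}$ in $X$ is the same as compactness of $\overline{\mathfrak{K}}$, which in a complete metric space reduces via sequential compactness to showing that every sequence in $\overline{\mathfrak{K}}$ admits a convergent subsequence.

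First I would handle the easy direction. If $\mathfrak{K}$ is relatively compact, then for any $\delta>0$ the open cover $\{B^X_\delta(w):w\in\overline{\mathfrak{K}}\}$ of the compact set $\overline{\mathfrak{K}}$ admits a finite subcover, which in particular is a finite $\delta$-covering of $\mathfrak{K}$. Hence $\mathfrak{K}$ is totally bounded.

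For the converse, assume $\mathfrak{K}$ is totally bounded. I would first note that $\overline{\mathfrak{K}}$ inherits total boundedness: any covering of $\mathfrak{K}$ by balls of radius $\delta/2$ with centers $w_1,\dots,w_{n_{\delta/2}}$ yields a covering of $\overline{\mathfrak{K}}$ by balls of radius $\delta$ at the same centers. Since $X$ is complete and $\overline{\mathfrak{K}}$ is closed in $X$, $\overline{\mathfrak{K}}$ is itself a complete metric space, so it suffices to show that a complete, totally bounded metric space is sequentially compact. Given any sequence $(w_n)\subset\overline{\mathfrak{K}}$, I would apply total boundedness at scale $2^{-k}$ for each $k\in\N$ to extract, iteratively, nested subsequences $(w_{n_j^{(k)}})_j$ all of whose terms lie in a common ball of radius $2^{-k}$. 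The Cantor diagonal sequence $(w_{n_k^{(k)}})_k$ is then Cauchy, and by completeness it converges to a limit in $\overline{\mathfrak{K}}$, as required.

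The main obstacle is essentially absent: this is a classical result (for which Yosida is cited), and the only substantive ingredient is the Cantor diagonal extraction, which converts total boundedness at every scale into a Cauchy subsequence, after which completeness of $X$ supplies the limit.
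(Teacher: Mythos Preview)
Your proof is correct and is the standard argument for this classical result. The paper does not actually prove this theorem: it states it as a ``fundamental fact'' and simply cites Yosida \cite{yosida}, so there is no proof in the paper to compare against. Your diagonal-extraction argument is exactly what one finds in standard references.
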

We are now ready to prove the main result of this appendix. The strong equicontinuity assumption could be weakened but the following is sufficient for our purposes.
\begin{theorem}Let $\BL$ be an admissible Banach space.
	Suppose that $\mathfrak{K}\subset \BL$ is 
	strongly equicontinuous.
	Then $\mathfrak{K}$ is relatively compact if and only if
	\begin{itemize}
		\item[(RC1)]$\mathfrak{K}$ is bounded, and
		\item[(RC2)] for every $\varepsilon > 0$ there exists
		a compact set $\Lambda_\varepsilon \subset \Lambda$ such that
		$$
			\sup_{w\in \mathfrak{K}}\|w\chi_{\Lambda\setminus \Lambda_{\varepsilon}}\|_{\BL} \le \varepsilon.
		$$
	\end{itemize}
\end{theorem}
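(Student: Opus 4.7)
The plan is to split the equivalence into its two implications; strong equicontinuity will be needed only for sufficiency. For necessity, I would invoke Theorem \ref{thm:totalbound}: (RC1) is immediate since totally bounded sets in normed spaces are bounded. For (RC2), given $\varepsilon>0$, I would cover $\mathfrak{K}$ by finitely many $\BL$-balls of radius $\varepsilon/2$ around some $w_1,\dots,w_K$, then use Definition \ref{assump}(iii) to replace each $w_i$ by a compactly supported $\tilde w_i$ with $\|w_i-\tilde w_i\|_\BL\le \varepsilon/2$. Taking $\Lambda_\varepsilon$ to be the (compact) union of the supports of the $\tilde w_i$, solidity combined with $\tilde w_i\chi_{\Lambda\setminus\Lambda_\varepsilon}=0$ yields, for any $w\in\mathfrak{K}$ lying in the ball around its nearest $\tilde w_i$,
$$
\|w\chi_{\Lambda\setminus\Lambda_\varepsilon}\|_\BL = \|(w-\tilde w_i)\chi_{\Lambda\setminus\Lambda_\varepsilon}\|_\BL \le \|w-\tilde w_i\|_\BL \le \varepsilon.
$$

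The substance lies in sufficiency. Given $\varepsilon>0$, the plan is to construct a finite-rank ``discretization'' $T:\mathfrak{K}\to\BL$ satisfying $\|w-Tw\|_\BL\le \varepsilon/2$ for every $w\in\mathfrak{K}$, whose image lies in a bounded subset of some finite-dimensional subspace of $\BL$; total boundedness of $T(\mathfrak{K})$ there, together with the triangle inequality, will force total boundedness of $\mathfrak{K}$. Concretely, I would first pick a compact $\Lambda_\varepsilon$ via (RC2) with $\sup_{w\in\mathfrak{K}}\|w\chi_{\Lambda\setminus\Lambda_\varepsilon}\|_\BL\le \varepsilon/4$ and set $C:=\|\chi_{\Lambda_\varepsilon}\|_\BL$, which is finite by Definition \ref{assump}(i); then apply strong equicontinuity at scale $\varepsilon/(4C)$ to obtain neighborhoods $U(\lambda)$ on which every $w\in\mathfrak{K}$ oscillates by at most $\varepsilon/(4C)$; extract a finite subcover $U(\lambda_1),\dots,U(\lambda_N)$ of $\Lambda_\varepsilon$ by compactness; disjointify via $V_i:=(U(\lambda_i)\cap\Lambda_\varepsilon)\setminus\bigcup_{j<i}U(\lambda_j)$ to get a partition of $\Lambda_\varepsilon$; and finally define
$$
Tw:=\sum_{i=1}^N w(\lambda_i)\chi_{V_i}.
$$
Each $\chi_{V_i}$ lies in $\BL$ by solidity (via $\chi_{V_i}\le\chi_{\Lambda_\varepsilon}$), so $T$ takes values in the $N$-dimensional subspace $\spann\{\chi_{V_i}\}_{i=1}^N$.

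The remaining estimate then splits cleanly: on $\Lambda_\varepsilon$, the pointwise bound $|w(\mu)-w(\lambda_i)|\le \varepsilon/(4C)$ for $\mu\in V_i$ combines with solidity to give $\|(w-Tw)\chi_{\Lambda_\varepsilon}\|_\BL \le (\varepsilon/(4C))\cdot C=\varepsilon/4$; off $\Lambda_\varepsilon$, one has $(w-Tw)\chi_{\Lambda\setminus\Lambda_\varepsilon}=w\chi_{\Lambda\setminus\Lambda_\varepsilon}$, contributing at most $\varepsilon/4$ by the choice of $\Lambda_\varepsilon$. Hence $\|w-Tw\|_\BL\le\varepsilon/2$, and thus $\|Tw\|_\BL\le \|w\|_\BL+\varepsilon/2$ is uniformly bounded by (RC1). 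Since boundedness coincides with total boundedness in the finite-dimensional space $\spann\{\chi_{V_i}\}_{i=1}^N$, a finite cover of $T(\mathfrak{K})$ by $\varepsilon/2$-balls lifts via the triangle inequality to a finite cover of $\mathfrak{K}$ by $\varepsilon$-balls, and Theorem \ref{thm:totalbound} closes the argument. The main obstacle I anticipate is the careful bookkeeping that couples the pointwise oscillation bound from Definition \ref{def:equicont} to the abstract solid-Banach-space norm: in particular, one must verify $\chi_{V_i}\in\BL$ via solidity and admissibility of $\Lambda_\varepsilon$, and correctly apportion the $\varepsilon$-budget between the ``localized'' region $\Lambda_\varepsilon$ (where the constant $C$ appears and must be absorbed by the equicontinuity scale) and the ``tail'' $\Lambda\setminus\Lambda_\varepsilon$, so that the final estimate is genuinely $O(\varepsilon)$.
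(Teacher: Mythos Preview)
Your proposal is correct and follows essentially the same approach as the paper: for necessity you use total boundedness plus density of compactly supported elements exactly as the paper does, and for sufficiency both arguments restrict to a compact $\Lambda_\varepsilon$ via (RC2), extract a finite disjoint cover from the strong-equicontinuity neighborhoods, and approximate each $w$ by its ``piecewise-constant'' discretization in the resulting finite-dimensional span of indicator functions. The only differences are cosmetic---your $\varepsilon$-budgeting ($\varepsilon/4$ for the tail, $\varepsilon/(4C)$ for the equicontinuity scale) is fixed in advance, whereas the paper carries an auxiliary parameter $\nu$ and shrinks it at the end; and your disjointification $V_i=(U(\lambda_i)\cap\Lambda_\varepsilon)\setminus\bigcup_{j<i}U(\lambda_j)$ is explicit where the paper merely says ``by reducing these sets''.
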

\begin{proof}
	We start with the implication that if $\mathfrak{K}$ is relatively compact and strongly equicontinuous, then (RC1) and (RC2) hold. First of all, clearly $\mathfrak{K}$ is bounded if it is relatively compact. We need to show (RC2). By assumption there exists a finite $\varepsilon/2$-cover
	of $\mathfrak{K}$ which we denote by $K_1,\dots , K_N$. Pick $z_i\in K_i$ for all $i=1,\dots , N$. Then, by the admissibility of $\BL$ there exist compact subsets $\Lambda_1,\dots , \Lambda_N\subset \Lambda$ such that 
		$$
			\sup_{i=1,\dots, N} \|z_i \chi_{\Lambda\setminus \Lambda_i}\|_{\BL} \le \varepsilon/2. 
		$$
		Now denote $\Lambda_\varepsilon:=\bigcup_{i=1}^N \Lambda_i$,
		which is a compact subset of $\Lambda$.
		Let $w\in \mathfrak{K}$ be arbitrary. Then, by assumption there exists
		$i\in \{1,\dots , N\}$ with $\|w-z_i\|_{\BL}\le \varepsilon/2$.
		This implies (by the solidity of $\BL$) that
		$$
			\|w\chi_{\Lambda\setminus \Lambda_{\varepsilon}}\|_{\BL}\le \|w-z_i\|_{\BL}+\|z_i\chi_{\Lambda\setminus \Lambda_{i}}\|_{\BL}\le \varepsilon,
		$$
		which implies (RC2).
		
		Now to the converse statement. 
		Suppose that $\mathfrak{K}$ is strongly equicontinuous and that (RC1) and (RC2) hold true. First note that, by Theorem \ref{thm:totalbound}, relative compactness follows if we can show that for every $\varepsilon>0$ there exists 
		a compact subset $\mathfrak{K}_\varepsilon \subset \BL$ with
		\begin{equation}\label{eq:compapprox}
			\sup_{w\in \mathfrak{K}}\inf_{z\in \mathfrak{K}_{\varepsilon}}\|w-z\|_{\BL}\le \varepsilon,
		\end{equation}
		because this would immediately imply the total boundedness of $\mathfrak{K}$.
		
		Pick $\Lambda_{\varepsilon/2}$ as in (RC2). Consider 
		the set $\mathfrak{K}':= \overline{\{w\chi_{\Lambda_{\varepsilon/2}} :\ w\in \mathfrak{K}\}}$.
		By construction we have that
		$$
			\sup_{w\in \mathfrak{K}}\inf_{z\in \mathfrak{K}'}\|w-z\|_{\BL}\le \varepsilon/2.
		$$
		If $\Lambda$ is discrete we are now finished because in 
		that case the set $\mathfrak{K}'$ is compact. To prove the result for general $\Lambda$ we proceed as follows: 
		
		Let $\nu>0$ be arbitrary and let $U_\nu(\lambda)$ be the sets from Definition \ref{def:equicont}. Consider the sets
		$V_\nu(\lambda):=U_\nu(\lambda)\cap \Lambda_{\varepsilon/2}$, $\lambda \in \Lambda_{\varepsilon/2}$,
		which constitute an open covering of the compact set $\Lambda_{\varepsilon/2}$. By compactness of $\Lambda_{\varepsilon/2}$ there exist $\lambda_1,\dots , \lambda_N\in \Lambda_{\varepsilon/2}$ such that the sets
		$V_\nu(\lambda_i)$, $i=1,\dots , N$ cover $\Lambda_{\varepsilon/2}$. By reducing these sets (and sacrificing the openness assumption) we can further achieve for simplicity that the sets
		$V_{\nu}(\lambda_i)$ constitute a disjoint covering in the
		sense that
		\begin{equation}\label{disjoint-cover}
		\sum_{i=1}^N\chi_{V_\nu(\lambda_i)} = \chi_{\Lambda_{\varepsilon/2}}.
		\end{equation}
		Consider the set
		$$
			\mathfrak{K}'':= \overline{\{\sum_{i=1}^N w(\lambda_i)\chi_{V_\nu(\lambda_i)}:\ w\in \mathfrak{K}' \}}.
		$$
		The set $\mathfrak{K}''$ is contained in a finite dimensional subspace (the span of the indicator functions of the sets $V_\nu(\lambda_i)$) and it is bounded, which can be seen as follows:
		
		It suffices to consider elements in $\mathfrak{K}''$ of the form $\sum_{i=1}^N w(\lambda_i)\chi_{V_\nu(\lambda_i)}$, where $w=w\chi_{\Lambda_{\varepsilon/2}}$ and $w \in \mathfrak{K}$ (since the more general case follows by limiting arguments). We have that
		$$
			\| \sum_{i=1}^N w(\lambda_i)\chi_{V_\nu(\lambda_i)}\|_\BL \leq \| w \chi_{\Lambda_{\varepsilon/2}} - \sum_{i=1}^N w(\lambda_i)\chi_{V_\nu(\lambda_i)}\|_\BL + \| w \chi_{\Lambda_{\varepsilon/2}}\|_\BL.
		$$
		Exploiting (\ref{disjoint-cover}), the strong equicontinuity assumption and the solidity of $\BL$, one further obtains
		\begin{align*}
			\| \sum_{i=1}^N w(\lambda_i)\chi_{V_\nu(\lambda_i)}\|_\BL &\leq \|  \sum_{i=1}^N (w-w(\lambda_i))\chi_{V_\nu(\lambda_i)}\|_\BL + \| w \chi_{\Lambda_{\varepsilon/2}}\|_\BL, \\
			& \leq \nu \|  \chi_{\Lambda_{\varepsilon/2}}\|_\BL + \| w \chi_{\Lambda_{\varepsilon/2}}\|_\BL.
		\end{align*}
		Hence, by assumption (RC1), $\mathfrak{K}''$ is bounded. Altogether, this establishes that $\mathfrak{K}''$ is compact.

		 Now, let $w\in \mathfrak{K}$ be arbitrary.
		 Then we have that
		 $$
		 	\|w-\sum_{i=1}^N w(\lambda_i)\chi_{V_\nu(\lambda_i)}\|_{\BL}\le
		 	\|w-w\chi_{\Lambda_{\varepsilon/2}}\|_\BL
		 	+\|\sum_{i=1}^N(w- w(\lambda_i))\chi_{V_\nu(\lambda_i)}\|_\BL .
		 $$
		 The first term above is bounded by $\varepsilon/2$.
		 By the strong equicontinuity assumption and the solidity the second term is bounded by $\nu \|\chi_{\Lambda_{\varepsilon/2}}\|_\BL$.
		 
		 In summary we get that
		 $$
				\|w-\sum_{i=1}^N w(\lambda_i)\chi_{V_\nu(\lambda_i)}\|_{\BL}
				\le \varepsilon/2 + \nu \|\chi_{\Lambda_{\varepsilon/2}}\|_{\BL}. 
		 $$
		 Note that $\nu$ was arbitrary. By making it sufficiently small we can achieve that 
		 $$
		 	\sup_{w\in \mathfrak{K}}\inf_{z\in \mathfrak{K}''}\|w-z\|_{\BL}\le \varepsilon,
		 $$
		 as desired. This implies (\ref{eq:compapprox}) and the proof is finished.
\end{proof}
The previous result is interesting in its own right; for our purpose the following corollary will be especially useful.
\begin{corollary}\label{cor:relcomp}
	Suppose that $\BL$ is an admissible Banach space and that
	$\mathfrak{K}\subset \BL$ is strongly equicontinuous. Then
	$\mathfrak{K}$ is relatively compact if and only if $|\mathfrak{K}|$
	is relatively compact.
\end{corollary}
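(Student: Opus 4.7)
The plan is to reduce the corollary directly to the preceding theorem by observing that all three ingredients feeding into that theorem --- strong equicontinuity, boundedness (RC1), and uniform tightness (RC2) --- are invariant under passage between $\mathfrak{K}$ and $|\mathfrak{K}|$. Once this invariance is established, the corollary follows by applying the theorem twice.

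First I would verify that $|\mathfrak{K}|$ inherits strong equicontinuity from $\mathfrak{K}$. This is an immediate consequence of the pointwise reverse triangle inequality $\bigl||w(\mu)|-|w(\lambda)|\bigr|\le |w(\mu)-w(\lambda)|$: taking the supremum over $w\in\mathfrak{K}$ shows that the same neighborhoods $U_\varepsilon(\lambda)$ which witness strong equicontinuity of $\mathfrak{K}$ work for $|\mathfrak{K}|$ as well.

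Next I would invoke solidity of $\BL$ (Definition \ref{assump}(ii)), which gives $\|w\|_\BL = \||w|\|_\BL$ and, more generally, $\|w\chi_{\Lambda\setminus\Lambda'}\|_\BL = \||w|\chi_{\Lambda\setminus\Lambda'}\|_\BL$ for every $\Lambda'\subset\Lambda$. Therefore $\mathfrak{K}$ is bounded if and only if $|\mathfrak{K}|$ is bounded, and the tail estimate (RC2) holds for $\mathfrak{K}$ with a given compact $\Lambda_\varepsilon$ if and only if it holds for $|\mathfrak{K}|$ with the same $\Lambda_\varepsilon$.

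With these preparations the proof is symmetric. If $\mathfrak{K}$ is relatively compact, then by the preceding theorem it satisfies (RC1) and (RC2); these transfer to $|\mathfrak{K}|$ by the solidity observation, and combined with the strong equicontinuity of $|\mathfrak{K}|$ a second application of the theorem yields relative compactness of $|\mathfrak{K}|$. Conversely, if $|\mathfrak{K}|$ is relatively compact, then --- using that $|\mathfrak{K}|$ is strongly equicontinuous --- the theorem gives (RC1) and (RC2) for $|\mathfrak{K}|$, which transfer back to $\mathfrak{K}$ and, together with its strong equicontinuity, give relative compactness of $\mathfrak{K}$. There is essentially no obstacle: the heavy lifting was done in the theorem, and solidity together with the reverse triangle inequality makes the passage between $\mathfrak{K}$ and $|\mathfrak{K}|$ automatic.
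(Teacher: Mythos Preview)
Your proposal is correct and is exactly the argument the paper has in mind: the corollary is stated without proof precisely because it follows from the preceding theorem via the invariance of (RC1), (RC2), and strong equicontinuity under $w\mapsto|w|$, which you verify cleanly using solidity and the reverse triangle inequality. One minor remark: the forward implication can be obtained even more directly, since solidity gives $\||w|-|z|\|_{\BL}\le\|w-z\|_{\BL}$, so $w\mapsto|w|$ is $1$-Lipschitz and carries relatively compact sets to relatively compact sets; but your symmetric treatment via the theorem is entirely adequate.
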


\end{document}